\newtheorem{theorem}{Theorem}[section]
\newtheorem{lemma}[theorem]{Lemma}
\newtheorem{proposition}[theorem]{Proposition}
\newtheorem{corollary}[theorem]{Corollary}
\theoremstyle{remark}
\newtheorem*{remark}{Remark}
\theoremstyle{definition}
\theoremstyle{remark}
\numberwithin{equation}{section}
\newcommand{\ee}{\varepsilon}
\newcommand{\1}{\mathbf{1}}
\newcommand{\dd}{{\rm d}}
\newcommand{\CA}{\mathcal{A}}
\newcommand{\cond}{\operatorname{cond}}
\begin{document}

\title[Bombieri-Vinogradov for multiplicative functions, II]{When does the Bombieri-Vinogradov Theorem hold for a given multiplicative function?}

\author[A. Granville]{Andrew Granville}
\address{AG: D\'epartement de math\'ematiques et de statistique\\
Universit\'e de Montr\'eal\\
CP 6128 succ. Centre-Ville\\
Montr\'eal, QC H3C 3J7\\
Canada; 
and Department of Mathematics \\
University College London \\
Gower Street \\
London WC1E 6BT \\
England.
}
\thanks{A.G.~has received funding from the
European Research Council  grant agreement n$^{\text{o}}$ 670239, and from NSERC Canada under the CRC program.}
\email{{\tt andrew@dms.umontreal.ca}}
\author[X. Shao]{Xuancheng Shao }
\address{Mathematical Institute\\ Radcliffe Observatory Quarter\\ Woodstock Road\\ Oxford OX2 6GG \\ United Kingdom}
\email{Xuancheng.Shao@maths.ox.ac.uk}
\thanks{X.S.~was supported by a Glasstone Research Fellowship.}
 
 \begin{abstract} Let $f$ and $g$ be   $1$-bounded multiplicative functions for which $f*g=1_{.=1}$. The Bombieri-Vinogradov Theorem holds for both $f$ and $g$  
 if and only if  
  the Siegel-Walfisz criterion holds for both $f$ and $g$, and 
  the Bombieri-Vinogradov Theorem  holds for $f$ restricted to the primes.
 \end{abstract}

\subjclass[2010]{11N56}
\keywords{Multiplicative functions, Bombieri-Vinogradov theorem, Siegel-Walfisz theorem, smooth numbers}

\date{\today}

\maketitle

\section{Introduction}

\subsection{Background and the main result}
Given an arithmetic function $f$, we define, whenever $(a,q)=1$,
 $$
\Delta(f,x;q,a):=\sum_{\substack{n\leq x \\ n\equiv a \pmod q}} f(n)  -  \frac 1{\varphi(q)} \sum_{\substack{n\leq x \\ (n,q)=1}} f(n),
$$
which, as $a$ varies, indicates how well $f(.)$ is distributed in the arithmetic progressions mod $q$. In many examples it is difficult to obtain a strong bound on $\Delta(f,x;q,a)$ for arithmetic progressions modulo a particular  $q$ but one can perhaps do better on ``average''. We therefore define the following:

\medskip

\noindent \textbf{The Bombieri-Vinogradov Hypothesis for $f(n)$ with $n$ up to $x$.} \textit{For any given $A>0$ there exists a constant $B=B(A)$ such that }
\begin{equation}\label{eq:BVI}
\sum_{\substack{ q\leq \sqrt{x}/(\log x)^B }}  \max_{a:\ (a,q)=1}  | \Delta(f,x;q,a) | \ll_A \frac x {(\log x)^A}.
\end{equation}
\medskip

The Bombieri-Vinogradov Hypothesis for $f$   formulates the idea that $f$ is well-distributed, on ``average'',  in arithmetic progressions with moduli $q$ almost as large as $\sqrt{x}$. It also directly implies that $f$ is well-distributed in arithmetic progressions with small moduli. In particular the following is an immediate consequence:

\medskip

\noindent \textbf{The Siegel-Walfisz criterion for $f(n)$ with $n$ up to $x$.} \textit{For any given $A>0$ and any $(a,q)=1$ we have  }
\begin{equation}\label{eq:SW}
|\Delta(f,x;q,a)| \ll_A  \frac x {(\log x)^A} .
\end{equation}
\medskip

In this article we focus on $1$-bounded multiplicative functions $f$;  that is, those $f$ for which $|f(n)|\leq 1$ for all $n\geq 1$. We define
\[
F(s) = \sum_{n=1}^{\infty} \frac{f(n)}{n^{s}}\ \ \text{and} \ \
- \frac{F'(s)}{F(s)} = \sum_{n=2}^{\infty}   \frac{\Lambda_f(n)}{n^{s}},  
\]
for Re$(s)>1$.  The function $\Lambda_f(.)$ is supported only on prime powers; 
we restrict  attention to the class ${\mathcal C} $ of multiplicative functions $f$ for which
\[
 |\Lambda_f(n)|\leq   \Lambda(n) \ \ \textrm{for all} \ \ n\geq 1.
 \]
This includes  most $1$-bounded multiplicative functions of interest, including all $1$-bounded completely multiplicative functions. Two key observations are that if $f\in \mathcal C$ then each $|f(n)|\leq 1$, and if $f\in\mathcal C$ and $F(s)G(s)=1$ then $g\in \mathcal C$. Here $g$ is the convolution inverse of $f$; that is, $(f*g)(n)=1$ if $n=1$, and $0$ otherwise.

Define ${\mathcal P}$ to be the set of primes, so that the arithmetic function $f\cdot 1_{\mathcal P}$ is the function $f$ but supported only on the primes.  The classical Bombieri-Vinogradov Theorem is, in our language, the Bombieri-Vinogradov Hypothesis for $1_{\mathcal P}$. The Bombieri-Vinogradov Hypothesis holds  trivially for the corresponding multiplicative function $1(.)$. Many of the proofs of the Bombieri Vinogradov theorem (for example, those going through Vaughan's identity)  relate the distribution of $1_{\mathcal P}$ in arithmetic progressions to the distribution of $\mu(.)$ in arithmetic progressions; note that $\mu$ is the convolution inverse of the multiplicative function $1$. This is the prototypical example of the phenomenon we discuss in this article.

Our main question here is  to address for what $f$ does the Bombieri-Vinogradov Hypothesis hold? Evidently 
the  Siegel-Walfisz criterion must hold for $f$, but what else is necessary? In \cite[Proposition 1.4]{GSh}, we exhibited an $f\in \mathcal C$ for which the  Siegel-Walfisz criterion holds, and yet \eqref{eq:BVI} fails for any $A>1$ and any $B$. The key feature in our   construction of $f$ was that  the Bombieri-Vinogradov Hypothesis did not hold for $f\cdot 1_{\mathcal P}$. As we now see in our main result, this is also   a necessary condition:

\begin{theorem}\label{thm: MathTheorem}  Suppose that $f, g\in \mathcal C$ with $F(s)G(s)=1$. \hfill  \break
\indent \emph{(a)}\ If the Bombieri-Vinogradov Hypothesis holds for both $f$ and $g$ then the Bombieri-Vinogradov Hypothesis holds for $f\cdot 1_{\mathcal P}$; and \hfill  \break
\indent \emph{(b)}\ If the Bombieri-Vinogradov Hypothesis holds for $f\cdot 1_{\mathcal P}$ and  the Siegel-Walfisz criterion holds for $f$ then the Bombieri-Vinogradov Hypothesis holds for   $f$.
 \end{theorem}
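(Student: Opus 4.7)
Both parts rest on convolution identities that follow from $FG=1$ by logarithmic differentiation: $\Lambda_f = (f\log)\ast g$, equivalently $f\log = f\ast \Lambda_f$. A preliminary reduction shows that BV for $f\cdot 1_{\mathcal P}$ is equivalent to BV for $\Lambda_f$: the difference comes from prime powers $p^k$ with $k\geq 2$, whose total contribution is $O(\sqrt{x}\log x)$ and hence negligible after averaging over $q\leq Q$, while partial summation converts between the $\log p$ weighted and unweighted sums on primes.

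\textbf{Part (a).} From $\Lambda_f = (f\log)\ast g$ a direct computation gives
\[
\Delta(\Lambda_f, x; q, a) \;=\; \sum_{\substack{d\leq x\\ (d,q)=1}} f(d)\log d\cdot \Delta\bigl(g,\, x/d;\, q,\, ad^{-1}\bigr).
\]
I split the $d$-range into small, large, and medium. For small $d\leq D_0:=(\log x)^C$, the modulus $Q=\sqrt{x}/(\log x)^B$ is still admissible for BV for $g$ at height $x/d$ (provided $B$ is chosen large compared to $C$), so BV for $g$ applies termwise and the contribution is absorbed. For large $d$, Dirichlet's hyperbola method swaps to the dual variable $e=n/d$, and for $e\leq D_0$ one invokes BV for $f$ (converted to $f\log$ by partial summation) at height $x/e$. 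The remaining medium range, where both $d$ and $e$ are moderate, is a genuine Type~II bilinear sum, which I treat by expansion into Dirichlet characters, Cauchy--Schwarz in $\chi$, and the large sieve, using the character forms of BV for $f$ and for $g$ as inputs.

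\textbf{Part (b).} Given BV for $\Lambda_f$ and SW for $f$, the identity $f\log = f\ast \Lambda_f$ yields
\[
\Delta(f\log, x; q, a) \;=\; \sum_{\substack{de\leq x\\ de\equiv a\,(q)}} f(d)\Lambda_f(e) \;-\; (\text{main term}),
\]
to which I apply a Vaughan-style decomposition. The piece with $d\leq U:=(\log x)^{C'}$ is controlled by BV for $\Lambda_f$ at height $x/d$. For the complementary range $d>U$, further splitting $\Lambda_f = \Lambda_{f,\leq V}+\Lambda_{f,>V}$ isolates a Type~II bilinear piece (both $d$ and $e$ moderate) handled by the large sieve, with $L^2$ coefficient sums $\sum|f(d)|^2\leq D$ and $\sum|\Lambda_f(e)|^2\leq E\log E$; the residual "small-$e$" Type~I piece requires a careful combination of SW for $f$ with the BV bound for $f\cdot 1_{\mathcal P}$ on the outer sum. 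Finally, BV for $f$ itself is obtained from BV for $f\log$ by partial summation uniformly in $q$.

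The main obstacle is that BV for any of $f$, $g$, or $\Lambda_f$ at a reduced height $y=x/d<x$ with the \emph{original} modulus $Q(x)=\sqrt{x}/(\log x)^B$ is admissible only when $d$ is polylogarithmic, because $Q(x)$ typically exceeds $\sqrt{y}/(\log y)^{B'}$. This constraint forces the three-way split above and pushes the decisive estimate onto the large sieve in the Type~II regime; in part (b) it is additionally the case that SW for $f$ is quantitatively too weak to cover the small-$e$ Type~I piece on its own (summation over $q\leq Q$ would cost a factor $\sqrt{x}$), so delicate parameter balancing is needed to fold that piece into the Type~II contribution.
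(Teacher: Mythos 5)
For part (a) your outline follows essentially the paper's route: use $\Lambda_f = g * (f\log)$, split the convolution variable into small, medium, and large ranges, invoke BV for $g$ (resp.\ for $f$, via the $f \leftrightarrow f\log$ conversion of Lemma~\ref{lem: Log}) when the co-variable is confined to a polylogarithmic range so the height $x/d$ remains within a $(\log x)^{O(1)}$ factor of $x$, and use Cauchy--Schwarz plus the large sieve on the bilinear middle range. This is exactly the Friedlander--Iwaniec machinery the paper adapts as Lemma~\ref{lem:FISW3} and Proposition~\ref{prop: Convol2}.

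For part (b), however, your approach is genuinely different and has a gap. A Vaughan-style expansion of $f\log = f * \Lambda_f$ leaves a Type~I piece where $e$ (the $\Lambda_f$-variable) is a small prime power and $d > U$ is large; this requires bounding $\sum_{q\le Q}\max_a|\Delta(f_{>U}, x/e; q, a)|$, i.e.\ a long sum of $f$ in progressions averaged over $q$ almost up to $\sqrt{x}$ --- precisely the quantity one is trying to prove. SW for $f$ controls each $q$ individually but loses a factor of order $Q\approx\sqrt{x}$ on summation, and BV for $f\cdot 1_{\mathcal P}$ is irrelevant here since $d$ ranges over all integers, not primes. You flag this as needing ``delicate parameter balancing'', but there is no balancing that helps: unlike the $f=1$, $g=\mu$ case, a general $f\in\mathcal C$ gives no trivial handle on a long $f$-sum. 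The paper sidesteps the convolution with $\Lambda_f$ entirely: it writes $f = f_y + (f-f_y)$, controls $f-f_y$ by the elementary factorization $n=mp$ with $m<x/y$ and $p>y$ prime (Corollary~\ref{thm: conv2}), so that BV for $f\cdot 1_{\mathcal P}$ at height $x/m$ applies cleanly, and for the $y$-smooth part $f_y$ it proves BV unconditionally via Harper's combinatorial factorization $n = uv$ with $P_+(u)\le P_-(v)$, $v>V_0$, $v/P_-(v)\le V_0$ plus Perron's formula to decouple the cross-conditions (Proposition~\ref{prop:factorize2}, Theorem~\ref{Easy Cor 2*}). This produces a bilinear structure for the large sieve without any arithmetic identity involving $\Lambda_f$ or $g$, and is the step your outline is missing.
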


Since $g\cdot 1_{\mathcal P} = - f\cdot 1_{\mathcal P}$, this can all be expressed more succinctly as follows: 
\medskip

 \begin{mdframed} 
Suppose that $f, g\in \mathcal C$ with $f*g=1_{.=1}$. Then
\begin{center} The Bombieri-Vinogradov Hypothesis holds for both $f$ and $g$ \\ 
 if and only if  \\
The Bombieri-Vinogradov Hypothesis holds for $f\cdot 1_{\mathcal P}$, and  \\ 
  the Siegel-Walfisz criterion holds for both $f$ and $g$.
\end{center} 
 \end{mdframed}
\medskip

This kind of ``if and only if'' result in the theory of multiplicative functions bears some similarity to (and inspiration from) 
(1.4) and Theorem 1.2 of \cite{Kou}, and much of the discussion there.
 
\section{More explicit results}

Theorem \ref{thm: MathTheorem} is not  as powerful as it looks at first sight since  it is  of little use if one wishes to prove  \eqref{eq:BVI} for a function $f$ whose definition depends on a particular $x$ (as the hypothesis of Theorem \ref{thm: MathTheorem} makes assumptions for all $x$).  In this section we will give uniform versions of both parts of Theorem \ref{thm: MathTheorem}.

The   Bombieri-Vinogradov Hypothesis for $f\in \mathcal C$   fails if  $f$ is a character of small conductor (for example  $f(n) = (n/3)$), or  ``correlates'' with such a character; that is, the sum 
\[
 S_f(x,\chi):= \sum_{n\leq x} f(n) \overline{\chi}(n) 
\]
 is ``large''. We can take such characters  into account as follows:\  Given any finite set of primitive characters, $\Xi$,   let $\Xi_q$ be the set of characters mod $q$ that are induced by the characters in $\Xi$, and then define  
 \[
\Delta_\Xi(f,x;q,a):=  \sum_{\substack{n\leq x \\ n\equiv a \pmod q}} f(n) -   \frac 1{\varphi(q)} \sum_{ \substack{  \chi  \in   \Xi_q }} \chi(a) S_f(x,\chi) .
\]
Note that $\Delta(f,x;q,a)=\Delta_{ \{ 1\} }(f,x;q,a)$.

\subsection{Precise statement of Theorem~\ref{thm: MathTheorem}(b) and B-V for smooth-supported $f$}  Our uniform version of Theorem~\ref{thm: MathTheorem}(b) is  the following result, from which Theorem~\ref{thm: MathTheorem}(b) immediately follows.

\begin{theorem}\label{thm: mr2}
Fix $A\geq 0,\ B>A+5$ and $\gamma >2A+6$.
Given $x \geq 2$, let   $Q=x^{1/2}/(\log x)^{B}$ and $y = x/(\log x)^{\gamma}$. 
 Suppose that  $f\in \mathcal C$, and assume that 
 \begin{equation}\label{eq:BVI3}
\sum_{\substack{ q\leq Q }}  \max_{a:\ (a,q)=1}  | \Delta(f\cdot 1_{\mathcal P},X;q,a) | \ll \frac X {(\log x)^A \log (x/y)}
\end{equation}
for all $X$ in the range $y\leq X\leq x$; and that
\[
|\Delta(f,X;q,a)| \ll  \frac X {(\log x)^{A+2B}} ,
\]
whenever $(a,q)=1$, for all $X$ in the range $x^{1/2}\leq X\leq x$. 
Then
\[ 
\sum_{q \leq Q} \max_{(a,q)=1} |\Delta(f,x; q,a)|
\ll   \frac x{(\log x)^{A-1}}.
\]
\end{theorem}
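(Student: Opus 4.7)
The plan is to relate $\Delta(f,x;q,a)$ to analogous discrepancies for $f\log$ via Abel summation, and then to decompose $\Delta(f\log,t;q,a)$ using the convolution identity $f\log=\Lambda_f\ast f$ together with Dirichlet's hyperbola method. Writing $f(n) = (f(n)\log n)/\log n$ and integrating by parts against $1/\log t$ gives
\[
\Delta(f,x;q,a) = \frac{\Delta(f\log,x;q,a)}{\log x} + \int_2^x \frac{\Delta(f\log,t;q,a)}{t(\log t)^2}\,\dd t + O(1),
\]
so it suffices to establish
\[
\sum_{q\leq Q}\max_{(a,q)=1}|\Delta(f\log,t;q,a)| \ll \frac{t}{(\log x)^{A-2}}
\]
uniformly for $t\in[y,x]$, with trivial bounds dispatching the range $t<y$. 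Inserting this back and absorbing the $1/\log x$ factor out front recovers the target $x/(\log x)^{A-1}$.

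For the auxiliary estimate I would apply the hyperbola split at $d\leq y$, $e\leq t/y$ in $f\log=\Lambda_f\ast f$, decomposing $\Delta(f\log,t;q,a) = T_1+T_2-T_3$, where $T_1 = \sum_{d\leq y,(d,q)=1}\Lambda_f(d)\Delta(f,t/d;q,ad^{-1})$, $T_2 = \sum_{e\leq t/y,(e,q)=1}f(e)\Delta(\Lambda_f,t/e;q,ae^{-1})$, and $T_3 = \sum_{d\leq y,(d,q)=1}\Lambda_f(d)\Delta(f,t/y;q,ad^{-1})$. The easy piece is $T_2$: by Abel summation, and using that the difference between $\Lambda_f(n)$ and the weight $f(p)\log p$ supported on primes is supported on higher prime powers with total mass $O(X^{1/2}\log^2 X)$ up to $X$, one can express $\Delta(\Lambda_f,X;q,b)$ as a weighted combination of values of $\Delta(f\cdot 1_{\mathcal P},\cdot;q,b)$ at scales at most $X$, to which hypothesis~\eqref{eq:BVI3} directly applies. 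This yields
\[
\sum_{q\leq Q}\max_b|\Delta(\Lambda_f,X;q,b)| \ll \frac{X\log x}{(\log x)^A\,\log(x/y)}
\]
for $X\in[y,x]$, and summing against $|f(e)|\leq 1$ over $e\leq t/y$ produces a factor of $\log(t/y)\leq\log(x/y)$ which cancels the one in the denominator. The resulting contribution to $\sum_{q\leq Q}\max_a |T_2|$ is $\ll t/(\log x)^{A-1}$, comfortably within the budget.

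The main obstacle is the combined piece $T_1-T_3$, which equals
\[
\sum_{d\leq y,(d,q)=1}\Lambda_f(d)\bigl[\Delta(f,t/d;q,ad^{-1}) - \Delta(f,t/y;q,ad^{-1})\bigr],
\]
a $\Lambda_f$-weighted average of discrepancies of $f$ over the intervals $(t/y,t/d]$. Reversing the order of summation, it can also be identified with the complementary ``small-scale Type~II'' sum $\sum_{n>t/y,(n,q)=1}f(n)\Delta(\Lambda_f,t/n;q,an^{-1})$, for which the inner scale $t/n<y$ lies outside the range of \eqref{eq:BVI3}. Here the strategy is to apply the Siegel--Walfisz hypothesis for $f$ to each of the two $\Delta(f,\cdot;q,\cdot)$ terms, splitting the $d$-sum at $d\approx x^{1/2}$: below this threshold SW applies directly since $t/d\geq x^{1/2}$, while above it one must fall back on Brun--Titchmarsh or trivial bounds. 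The conditions $B>A+5$, $\gamma>2A+6$ and the strong SW exponent $A+2B$ are calibrated precisely so that the cumulative losses from summing these pointwise estimates against $\Lambda_f(d)$ and over $q\leq Q$ fit within the budget $t/(\log x)^{A-2}$; verifying this delicate calibration, and in particular controlling the residual range $d\in(x^{1/2},y]$ where no SW bound is directly available, is the technical core of the argument.
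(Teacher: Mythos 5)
Your route via the hyperbola method on $f\log=\Lambda_f*f$ is genuinely different from the paper's. The paper instead splits $f=f_y+(f-f_y)$ multiplicatively according to whether $n$ is $y$-smooth: the non-smooth part is handled by writing $n=mp$ with $p>y$, $m<x/y$ and feeding in hypothesis \eqref{eq:BVI3} directly; the $y$-smooth part is handled by Theorem~\ref{Easy Cor 2*}, whose proof (Propositions~\ref{prop:factorize},~\ref{prop:factorize2} and Corollary~\ref{Easy Cor 1}) uses Harper's factorization of smooth numbers together with the bilinear large sieve; and Lemma~\ref{Using SW} converts $\Delta_{\CA}$ back to $\Delta$ via Siegel--Walfisz. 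Your reduction to $\Delta(f\log,t;q,a)$ and the treatment of $T_2$ are fine, but the proposal has a genuine gap at $T_1-T_3$, which is precisely the piece that carries all the difficulty.

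The plan for $T_1-T_3$ is to apply the Siegel--Walfisz bound $|\Delta(f,t/d;q,\cdot)|\ll (t/d)/(\log x)^{A+2B}$ pointwise in $q$ and then sum over $q\leq Q$ and $d$. This cannot work: summing a pointwise bound over $q\leq Q$ costs a factor of order $Q\asymp x^{1/2}/(\log x)^{B}$, while SW only supplies a polylogarithmic saving $(\log x)^{A+2B}$, so the contribution is $\gg tx^{1/2}(\log x)^{-O(1)}$, hopelessly far above the budget $t/(\log x)^{A-2}$. No choice of the exponents $A,B,\gamma$ can rescue a pointwise argument against a loss of $x^{1/2}$; the entire content of a Bombieri--Vinogradov statement is that the average over $q$ up to $\approx x^{1/2}$ gains something that pointwise Siegel--Walfisz categorically does not give, so deducing BV for $f$ at level $Q\approx x^{1/2}$ by summing pointwise SW bounds for $f$ is essentially circular. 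The complementary range $d\in(x^{1/2},y]$ has the same problem, and moreover the trivial bound there, summed against $\Lambda(d)$ over $d$ up to $y$, already produces an extra factor $\log(y/x^{1/2})\asymp\log x$ on top of the $\log Q$ from the $q$-sum, so one gets $\gg t(\log x)^{2}$ rather than $\ll t/(\log x)^{A-2}$. To handle $T_1-T_3$ you need some genuine averaging-over-$q$ input (a bilinear large-sieve estimate adapted to the arithmetic structure of $f$ on its smooth part), which is exactly what the paper builds in Sections~\ref{sec:thmb-part1}--\ref{sec:thmb-part2}; the ``delicate calibration'' you defer to is not a matter of bookkeeping but of a missing mechanism.
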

 
\begin{remark}
 Theorem \ref{thm: mr2} is stronger the larger we can take $y$ (and thus the smaller we can take $\gamma$), since that reduces the assumptions made of the form 
 \eqref{eq:BVI3}. We have been able to take any $\gamma >2A+6$ in Theorem \ref{thm: mr2}.
 In section \ref{sec: SWnotBV}, we will show that we must have $\gamma \geq {A-3}$.  It would be interesting to know the optimal power, $\gamma$, of $\log x$ that one can take in the definition of $y$.
\end{remark}

An integer $n$ is \emph{$y$-smooth} if all of its prime factors are $\leq y$.
In \cite{GSh} we proved the Bombieri-Vinogradov Hypothesis for $y$-smooth supported 
$f\in \mathcal C$ satisfying the Siegel-Walfisz criterion, provided $y\leq x^{1/2-o(1)}$.  
For arbitrary $f\in \mathcal C$, we may therefore use this result to obtain the Bombieri-Vinogradov Hypothesis for the $f$-values restricted to $y$-smooth $n$, and need  a different approach for those $n$ that have a large prime factor (that is, a prime factor $>y$).

First though, we have been able to develop a rather different method based on ideas of Harper \cite{Har} to significantly extend our range for $y$.

\begin{theorem} \label{Easy Cor 2*} Fix $A\geq 0,\ B>A+5$ and $\gamma >2A+6$.
Given $x \geq 2$, let $Q=x^{1/2}/(\log x)^{B}$ and $y = x/(\log x)^{\gamma}$. Let  $\CA$ be the set of all primitive characters of conductor at most $ (\log x)^{B}$.
  If  $f\in \mathcal C$ is supported only on the $y$-smooth integers then
\[ 
\sum_{q \leq Q} \max_{(a,q)=1} |\Delta_{\CA}(f,x; q,a)|
\ll   \frac x{(\log x)^A}.
\]
\end{theorem}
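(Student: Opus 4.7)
The plan is to combine a character-sum reduction, a ``largest prime factor'' decomposition exploiting the $y$-smoothness of the support of $f$, and the multiplicative large sieve, in the spirit of Harper~\cite{Har}. Expanding $\Delta_{\mathcal{A}}$ in Dirichlet characters and noting that the characters in $\mathcal{A}_q$ are precisely those of conductor at most $(\log x)^B$, orthogonality gives
\[
\Delta_{\mathcal{A}}(f,x;q,a)=\frac{1}{\varphi(q)}\sum_{\substack{\chi\bmod q\\ \mathrm{cond}(\chi)>(\log x)^B}}\chi(a)\,S_f(x,\chi).
\]
Taking absolute values, reducing each $\chi$ to the primitive character inducing it (which costs only logarithmic factors), and summing over $q\le Q$, it suffices to prove
\[
\sum_{(\log x)^B<q\le Q}\frac{\log x}{\varphi(q)}\sum_{\chi\text{ primitive mod }q}|S_f(x,\chi)|\ll\frac{x}{(\log x)^A}.
\]

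Next, for each $n$ in the support of $f$ I write $n=p^km$ with $p=P^+(n)\le y$ the largest prime factor and $P^+(m)<p$; multiplicativity of $f$ gives $f(n)=f(p^k)f(m)$, so that
\[
S_f(x,\chi)=f(1)+\sum_{p\le y}\sum_{k\ge 1}f(p^k)\overline{\chi}(p^k)\sum_{\substack{m\le x/p^k\\ P^+(m)<p}}f(m)\overline{\chi}(m).
\]
I split the $p$-range dyadically into $p\sim P$ with $P\in[1,y]$. When $P>x^{1/2}$ one automatically has $k=1$ and $m<x^{1/2}$, and each piece is already a Type-II bilinear form in $(p,m)$. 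For smaller $P$ (and for the contribution of $k\ge 2$), I iterate by extracting the largest prime factor of $m$, in the spirit of Harper~\cite{Har}, until every piece is a bilinear form $\sum_{p\sim P}\sum_{m\sim M}a_p b_m\overline{\chi}(pm)$ with $PM\asymp x$ and with both $P$ and $M$ bounded below by a prescribed power of $\log x$ depending only on $\gamma$ and $B$.

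For each such bilinear piece, I apply Cauchy--Schwarz in $(q,\chi)$ together with the multiplicative large-sieve inequality applied separately to the Dirichlet polynomials $\tilde{A}(\chi)=\sum_{p\sim P}a_p\overline{\chi}(p)$ and $\tilde{B}(\chi)=\sum_{m\sim M}b_m\overline{\chi}(m)$. The restriction to primitive characters of conductor $>(\log x)^B$ gives the large sieve an additional saving over its trivial estimate, and the hypotheses $B>A+5$ and $\gamma>2A+6$ are calibrated so that the net saving, compounded across the $O((\log x)^{O(1)})$ bilinear pieces produced by the iterated decomposition, exceeds $(\log x)^{A+1}$, which absorbs the log losses from the reduction to primitive characters, the $q/\varphi(q)$-factor in the large sieve, and the dyadic summation.

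The main technical obstacle, I expect, is the extreme dyadic range $P\approx y$, in which $M=x/P\approx(\log x)^\gamma$ is only slightly above the conductor cut-off $(\log x)^B$. There the multiplicative large sieve is essentially tight, and the conductor restriction must be used carefully to squeeze out the final $(\log x)^A$ saving. This is the heart of the Harper-style refinement, and it is what allows the permissible range of $y$ to be extended from the $x^{1/2-o(1)}$ range of~\cite{GSh} all the way to $x/(\log x)^\gamma$.
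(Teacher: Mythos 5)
Your plan reproduces the architecture of the paper's proof (reduce to primitive characters of conductor above $(\log x)^B$, factor $y$-smooth integers into a bilinear form, Cauchy--Schwarz plus the multiplicative large sieve, exploit the conductor cut-off), and the observation that a prime factor $>x^{1/2}$ automatically gives a clean Type-II form is exactly the lever the paper uses to improve the saving from $(x/y)^{1/4}$ to $(x/y)^{1/2}$, which is what makes $\gamma>2A+6$ (rather than $4A+O(1)$) attainable. But several steps that carry real weight in the paper are missing or misdescribed.

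First, the Harper factorization is not an iteration. For $y$-smooth $n$ with $n>V_0$ one performs a \emph{single} cut: list the prime factors of $n$ in descending order and let $v$ be the product of an initial segment, stopping the first time $v>V_0$; then $v\in(V_0,yV_0]$ and the cofactor $u=n/v$ satisfies $u\le n/V_0$. This one cut is what guarantees $\max(u,v)\le X/V_0$ together with $v\le yV_0$, and the choice $V_0=(X/y)^{1/2}$ balances the two. Describing it as ``iterate by extracting the largest prime factor of $m$ until both factors are $\ge(\log x)^c$'' hides the stopping rule and, more importantly, does not address what happens when a prime appears to a high power: the stopping point may fall in the middle of a $p^k$-block, so that $u$ and $v$ are \emph{not} coprime. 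The paper must therefore first reduce to completely multiplicative $f$ (Corollary~\ref{Easy Cor 1}, via $f=g*f^*$ with $g$ supported on powerful numbers); your proposal uses only multiplicativity of $f\in\mathcal C$ and never flags this issue.

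Second, after the cut the two variables $u,v$ are \emph{not} independent: they are coupled by $uv\le X$ and by $P_+(u)\le P_-(v)$. Without decoupling these cross-conditions you do not have a genuine product $\bigl(\sum_u a_u\overline\psi(u)\bigr)\bigl(\sum_v b_v\overline\psi(v)\bigr)$ to which the large sieve applies. The paper handles this with two applications of Perron's formula (with $T=x^5$), costing only $(\log T)^2$; this is a necessary technical step and is entirely absent from your outline.

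Third, the pass to primitive characters is more than ``costs only logarithmic factors.'' Writing $q=rMn$ with $\psi\bmod r$ the primitive character inducing $\chi\bmod q$, the sum over imprimitive lifts introduces an auxiliary sum over $m$ with weight $1/\varphi(M)$ where $M=\operatorname{rad}(m)$, and one must run the entire argument uniformly for $S_f(x/m,\psi)$, disposing of large $m$ by Cauchy--Schwarz plus the large sieve first. This structure (the parameter $X=x/m$ throughout, and the initial discard of $m\ge x/y$) is part of why the final bound has the shape $\bigl(Qx^{1/2}+x^{7/8}+x/D\bigr)(\log x)^{4}+(xy)^{1/2}(\log x)^{3}$ and hence why $B>A+5$ and $\gamma>2A+6$ suffice; asserting that the hypotheses are ``calibrated'' does not establish this.

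Finally, a small but telling slip: in your $P>x^{1/2}$ case the cofactor $m$ can be as small as $1$, so it is \emph{not} bounded below by a power of $\log x$; what actually saves you there is that $\max(P,M)\le y$, so the middle term of the large-sieve bound is $\ll(xy)^{1/2}$. Rephrasing the goal of the factorization as ``$\max(U,V)$ small'' rather than ``$\min(U,V)$ large'' would align your heuristic with what actually happens in the estimate.
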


Proving this result takes up the bulk of this paper, and occupies Sections~\ref{sec:thmb-part1} and~\ref{sec:thmb-part2}. In Section~\ref{sec:thmb-part3} we deduce Theorem~\ref{thm: mr2} from Theorem~\ref{Easy Cor 2*}.
 

 \subsection{Precise statement of Theorem~\ref{thm: MathTheorem}(a)} It is considerably easier to prove the converse result, since one can easily express
 $f\cdot 1_{\mathcal P}$ in terms of a suitably weighted convolution of $f$ and $g$.
 Theorem \ref{thm: MathTheorem}(a)  follows from the next result, taking $\Xi=\{ 1\}$.  

\begin{theorem}\label{thm:fng}  Given $f\in \mathcal C$, define $g\in \mathcal C$ to be that multiplicative function for which $F(s)G(s)=1$. 
Fix $A,C \geq 0$ and $\ee > 0$.  Let $x$ be large, let $2 \leq Q \leq x^{1/2}/(\log x)^{5A/2+C/4+7/2}$ and let $\Xi$ be a set of at most $(\log x)^C$ primitive characters. Suppose that the following properties hold for both $h=f$ and $h=g$: (i) If $(a,q)=1$ then 
\[
\Delta_\Xi(h,X;q,a) \ll \frac{ X} {(\log x)^{14A+28}}
\]
for all $X$ in the range $x^{0.4}\leq X\leq x$; (ii) The B-V type result
\[
 \sum_{\substack{ q\leq Q }}  \max_{a:\ (a,q)=1}  |\Delta_\Xi (h ,X;q,a)|  \ll \frac X{(\log x)^{A+\ee}} 
 \]
holds for all $X$ in the range $x/(\log x)^{6A+C+10}\leq X\leq x$. Then
\begin{equation}\label{eq:BVI2}
\sum_{\substack{ q\leq Q }}  \max_{a:\ (a,q)=1}  | \Delta_\Xi(f\cdot 1_{\mathcal P},x;q,a) | \ll \frac x {(\log x)^{A}} ;
\end{equation}
and the same holds with $f\cdot 1_{\mathcal P}$ replaced by $g\cdot 1_{\mathcal P}$. 
 \end{theorem}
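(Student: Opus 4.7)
The plan is to invert the identity $-F'(s)/F(s)=G(s)\cdot(-F'(s))$, which translates into the Dirichlet convolution $\Lambda_f = g \ast (f\cdot L)$ with $L(n)\assign\log n$. Thus for $(a,q)=1$,
\[
\sum_{\substack{n\leq X\\ n\equiv a \pmod q}} \Lambda_f(n) \;=\; \sum_{\substack{md\leq X\\ md\equiv a \pmod q}} g(m) f(d) \log d.
\]
Since $\Lambda_f(p)=f(p)\log p$ for primes $p$, and the proper prime-power contribution to any progression is $O(\sqrt X\log X)$ (hence $O(Qx^{1/2}\log x)$ after summing over $q\le Q$, which is negligible), it suffices to prove the analogue of \eqref{eq:BVI2} with $f\cdot 1_{\mathcal P}$ replaced by $\Lambda_f$ (the $\Xi$-main term being adjusted correspondingly), and then remove the $\log n$ weight by partial summation. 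The partial summation step explains why hypothesis (ii) is required uniformly down to $X \geq x/(\log x)^{6A+C+10}$: one integrates the tail down and loses only a bounded power of $\log x$.

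Next I would introduce the cut-off $V\assign x^{1/2}$ and split the right-hand side as $S_1 + S_2$, where $S_1$ collects terms with $m\leq V$ and $S_2$ collects terms with $m>V$ (so that automatically $d<V$). For $S_1$, for each fixed $m$ coprime to $q$, the inner sum in $d$ is $\Delta_\Xi(f\cdot L, X/m; q, a\overline m)$ plus the expected $\Xi$-main term; using $|g(m)|\leq 1$ and swapping the orders of summation,
\[
\sum_{q\leq Q}\max_a |S_1(q,a)-\text{MT}_1(q)| \;\ll\; \sum_{m\leq V}\;\sum_{q\leq Q}\max_{(c,q)=1}|\Delta_\Xi(f\cdot L, X/m; q, c)|.
\]
Hypothesis (ii) applied to $h=f$ at the parameter $X/m$ delivers the saving $(\log x)^{A+\ee}$ (after a partial summation to absorb the $\log d$ weight), and the harmonic sum $\sum_{m\leq V}1/m\ll \log x$ eats only one logarithm, leaving the required $(\log x)^A$ saving. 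For the tail range of $m$ where $X/m$ is too small for hypothesis (ii) to apply directly, I fall back on the pointwise Siegel--Walfisz type hypothesis (i), which is uniform as long as $X/m\geq x^{0.4}$; this is why (i) is phrased with a power saving of at least $14A+28$, chosen so that the losses from $|\Xi|\le(\log x)^C$ and from summing over $q\le Q$ and $m\le V$ are absorbed. The sum $S_2$ is treated symmetrically by swapping the roles of $(m,g)$ and $(d,f)$ and invoking hypothesis (ii) for $h=g$; by the symmetry of the hypotheses in $f$ and $g$ the conclusion for $g\cdot 1_{\mathcal P}$ then follows from the same argument with $f$ and $g$ interchanged.

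The remaining task is to verify that the two main terms $\text{MT}_1(q)+\text{MT}_2(q)$ add up to the expected $\Xi$-main term for $\Lambda_f$, up to an error that sums acceptably over $q\leq Q$. This uses the identity $F(s)G(s)=1$ twisted by each primitive $\chi\in\Xi$, which gives $F_\chi(s)G_\chi(s)=1$ and hence $\sum_m g(m)\overline\chi(m)\cdot\sum_d (fL)(d)\overline\chi(d)$ equals $\sum_n \Lambda_f(n)\overline\chi(n)$ formally; the truncations at $V$ in $S_1,S_2$ then introduce a remainder that can be estimated by Rankin-style bounds or by a direct appeal to hypothesis (i) for one more partial summation. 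I expect this main-term bookkeeping, together with tracking the interplay between the characters in $\Xi$ and the cross-terms arising from $g\ast(fL)$, to be the main obstacle; the numerology of the exponents in the statement (the constraints $Q\le x^{1/2}/(\log x)^{5A/2+C/4+7/2}$, the $14A+28$ in (i), and the $6A+C+10$ in (ii)) is then dictated by summing up the individual log losses: one from $\sum 1/m$, one from partial summation, a factor $|\Xi|\le(\log x)^C$ in the $\Xi$-main-term bookkeeping, and the standard $(\log x)^B$ loss in the B-V modulus cap.
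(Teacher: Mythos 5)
Your proposed decomposition — writing $\Lambda_f = g*(f\log)$, splitting at $V=x^{1/2}$ into $S_1$ (terms with $m\leq V$) and $S_2$ (terms with $m>V$), and using hypothesis (ii) for small $m$ and hypothesis (i) for the middle range — has a fatal gap in the treatment of the middle range of $m$. When $(\log x)^{6A+C+10}<m\leq V$, hypothesis (ii) no longer applies (since $X/m$ drops below $x/(\log x)^{6A+C+10}$), and you propose to sum the pointwise Siegel--Walfisz bound (i) over $q\leq Q$. But summing a bound of the form $(X/m)/(\log x)^{14A+28}$ over $q\leq Q$ gives a factor of $Q\asymp x^{1/2}/(\log x)^{B}$, not a power of $\log x$. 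After also summing over $m$ one ends up with a bound of order $x^{3/2}/(\log x)^{O(1)}$ for that range, which is off by a factor of roughly $x^{1/2}$. No choice of the log exponent in (i) can repair this, because the excess is polynomial, not logarithmic.

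The missing idea is the bilinear large-sieve (dispersion) estimate for convolutions $f_M * g_N$ in arithmetic progressions, which is how the paper actually handles exactly this middle range. The paper's proof goes through a carefully chosen dyadic dissection of the pairs $(m,n)$ and an application of a (corrected) version of Theorem 9.16 of Friedlander--Iwaniec. That theorem gives, under a Siegel--Walfisz hypothesis for $g_N$, the bound
\[
\sum_{q\leq Q}\max_{(a,q)=1}|\Delta_\Xi(f_M*g_N,MN;q,a)| \ll \Bigl(Q+\sqrt{N}(\log Q)^2+\tfrac{\sqrt{MN}\log Q}{(\log N)^{A/7}}\Bigr)F(M)G(N),
\]
in which the saving comes from Cauchy--Schwarz and the multiplicative large sieve, not from a pointwise estimate summed trivially over $q$. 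This is packaged in the paper as Lemma~\ref{lem:FISW3} and Proposition~\ref{prop: Convol2}, and Theorem~\ref{thm:fng} is then deduced by (a) passing between $f$ and $f\log$ via Lemmas~\ref{lem: Log-SW} and~\ref{lem: Log}, and (b) invoking Proposition~\ref{prop: Convol2} with the pair of $1$-bounded functions $g$ and $(\log x)^{-1}f\log$. Your partial-summation bookkeeping, the removal of prime powers, and the symmetry between $f$ and $g$ are all consistent with the paper's argument, but without the bilinear dispersion input for the middle range of $m$ the proof cannot close.
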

 
The range $x^{0.4} \leq X \leq x$ appearing in property (i) above can be reduced to $x^{1/2}/(\log x)^{15A+C/2+29} \leq X \leq x$, but we are not concerned about this detail. We will prove Theorem~\ref{thm:fng} at the end of Section~\ref{sec:thma}, after establishing a result about convolutions in Section~\ref{sec:convolution}.

\section{The algebra of the Bombieri-Vinogradov Hypothesis}\label{sec:convolution}

We will establish the next main result using ideas from section 9.8 of \cite{FI} (which has its roots in Theorem 0 of \cite{BFI}). We will sketch a proof of  a modification of Theorem 9.17 of \cite{FI}, being a little more precise and correcting a  couple of minor errors. We assume that $f$ and $g$ are arithmetic functions, with $|f(n)|, |g(n)|\leq 1$ for all $n$, but not necessarily multiplicative.  Let $\Xi$ be a set of primitive characters. 
For $h \in \{f,g\}$ we assume that if $(a,q)=1$ then 
\begin{equation} \label{FISW}
\Delta_\Xi(h,N;q,a) \ll \frac{ H(N)   N^{1/2}} {(\log N)^A}
\end{equation}
for some $A \geq 0$, where $H(N):= (\sum_{n\leq N} |h(n)|^2)^{1/2}$. In \cite{FI} they have $\Xi=\{ 1\}$ throughout but the modifications for arbitrary $\Xi$ are straightforward. The first key step is (9.73) in \cite {FI}. One can be a bit more precise (e.g. by choosing $C$ there more precisely) and show that if $\psi$ is a character mod $q$ with $q\leq (\log N)^A$, but $\psi\not\in \Xi_q$, and if \eqref{FISW} holds, then for any positive integer $m$ we have
\[
\sum_{\substack{n\leq N \\ (n,m)=1}} h(n)\psi(n) \ll q^{1/3} \tau(m) \frac{ H(N)   N^{1/2}} {(\log N)^{A/3}}.
\]

Now  for $M,N \geq 2$ let $f_M(m)=f(m)$ if $m\leq M$ and $f_M(m)=0$ if $m>M$, and define $g_N$ similarly.
Assume that $M\leq N$ and that \eqref{FISW} holds for $h=g_N$.
Theorem 9.16 of~\cite{FI} then becomes,\footnote{Note that in the statement of Theorem 9.16 of~\cite{FI}, the authors claim 
to have obtained a power of $ \log MN$ in the denominator of the third term of the upper bound whereas we only claim a power of $\log N$, as in their proof. This makes no difference here since we added the hypothesis that $N\geq M$, but they did not have this hypothesis in \cite{FI}.}
\begin{equation} \label{FISW2}
\begin{split}
\sum_{\substack{ q\leq Q }}  \max_{a:\ (a,q)=1}  | \Delta_\Xi (f_M*g_N ,MN;q,a) | & \ll  \big(   Q+ \sqrt{N}(\log Q)^2\\
&+\frac{ \sqrt{MN} \log Q}{(\log N)^{A/7}}\big) F(M)G(N).
\end{split}
\end{equation}

We now assume that  \eqref{FISW} holds\footnote{In Theorem 9.17 of \cite{FI}, the authors only assume \eqref{FISW} for $h=g$. This is because of their over-optimistic error term in Theorem 9.16. The correction  seems to force one to   assume that \eqref{FISW} holds for $h=f$ as well.}  for both $h=f$ and $h=g$ for any $N$ in the range $\sqrt{x}\leq N\leq x$ and some $A > 7$. For $C \geq 0$ define
\[
(f*g)_C(r) := \sum_{\substack{mn=r \\ m,n\leq x/(\log x)^C}} f(m)g(n) .
 \]
Our version of   Theorem 9.17 of \cite {FI} states the following:

\begin{lemma}\label{lem:FISW3}
Let the notations and assumptions be as above. Let $B = A/7-1$ and $C = 2B+2$. Then
\[
\sum_{\substack{ q\leq Q }}  \max_{a:\ (a,q)=1}  | \Delta_\Xi ((f*g)_C ,x;q,a) |   \ll    \frac x{(\log x)^D} 
\]
for $Q=\sqrt{x}/(\log x)^B$ with $D = (B-3)/2$.
\end{lemma}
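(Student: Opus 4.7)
The plan is to decompose $(f*g)_C$ dyadically and apply (a slight adaptation of) \eqref{FISW2} to each piece. Set $N_0 := x/(\log x)^C$, and for dyadic $M = 1, 2, 4, \ldots$ let $\tilde{f}_M(m) := f(m) \mathbf{1}_{M \leq m < 2M}$, and analogously $\tilde{g}_N$. Then
\[
(f*g)_C = \sum_{\substack{M, N \text{ dyadic} \\ M, N \leq N_0}} \tilde{f}_M * \tilde{g}_N,
\]
and only boxes with $MN \leq x$ can contribute to $\sum_{r \leq x}$. Because \eqref{FISW} is assumed for both $f$ and $g$, the setup is symmetric under $f \leftrightarrow g$, so it suffices to treat boxes with $M \leq N$ (doubling the final bound). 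For each such box, applying \eqref{FISW2} after writing $\tilde{f}_M = f_{2M} - f_M$ and similarly for $\tilde{g}_N$, and using $F(2M), G(2N) \leq \sqrt{2M}, \sqrt{2N}$, yields
\[
\sum_{q \leq Q} \max_{(a,q)=1} |\Delta_\Xi(\tilde{f}_M * \tilde{g}_N, x; q, a)| \ll Q \sqrt{MN} + N \sqrt{M} (\log Q)^2 + \frac{MN \log Q}{(\log N)^{A/7}},
\]
modulo a negligible correction for the $O(\log x)$ boundary boxes with $x/4 < MN \leq x$, where \eqref{FISW2} naturally supplies the estimate up to $r \leq 4MN$ and the excess is absorbed.

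Next I sum over dyadic boxes with $M \leq N \leq N_0$ and $MN \leq x$. The first term contributes $\ll Q \sqrt{x} \log x = x/(\log x)^{B-1}$: for fixed $M$ the sum $\sum_N \sqrt{N}$ is geometric and dominated by its largest term $\sqrt{x/M}$, so $\sqrt{M} \cdot \sqrt{x/M} = \sqrt{x}$ summed over $O(\log x)$ dyadic values of $M$ gives the claim. The second term evaluates similarly by splitting the outer sum at $M = (\log x)^C$ (above which $\max N = x/M$, below which $\max N = N_0$), producing $(\log Q)^2 \cdot x/(\log x)^{C/2} = x/(\log x)^{C/2 - 2} = x/(\log x)^{B-1}$, using $C = 2B + 2$.

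The third term is the main obstacle, since the factor $(\log N)^{A/7}$ provides no savings when $N$ is small. I split at a threshold, say $N_* := x^{1/4}$. For $N \geq N_*$ one has $\log N \gg \log x$, so $(\log N)^{A/7} \gg (\log x)^{A/7} = (\log x)^{B+1}$; combined with $\sum_{M \leq N, MN \leq x} MN \ll x \log x$ this contributes $\ll x (\log x)^{2 - A/7} = x/(\log x)^{B-1}$. For $N < N_*$ I abandon \eqref{FISW2} in favour of the trivial bound $\sum_q \max_a |\Delta_\Xi(\tilde{f}_M * \tilde{g}_N, x; q, a)| \leq 2Q \cdot MN \leq 2Q x^{1/2}$, summed over the $O((\log x)^2)$ such boxes, giving $\ll x/(\log x)^{B-2}$.

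Combining yields an overall bound $\ll x/(\log x)^{B-2}$, which is $\ll x/(\log x)^D$ for $D = (B-3)/2$ as soon as $B \geq 1$ (and the lemma is of interest only when $D > 0$, i.e.\ $B > 3$). The remaining technical issues---the boundary boxes with $x/4 < MN \leq x$ and the boxes with $N < M$ handled by swapping $f \leftrightarrow g$---are straightforward given the symmetric hypothesis.
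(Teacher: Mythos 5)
Your dyadic decomposition runs into a serious obstruction at the \emph{boundary boxes}, and the way you propose to dismiss them does not work. Consider a dyadic pair $(M,N)$ with $MN \le x < 4MN$. The estimate \eqref{FISW2} controls $\Delta_\Xi(f_M*g_N, MN;q,a)$, i.e.\ the \emph{complete} partial sum of the convolution up to its full support $MN$; it says nothing about a sum truncated at a level $x$ strictly below $4MN$. What you actually need to bound is
\[
\sum_{\substack{mn\le x \\ m\in[M,2M),\, n\in[N,2N) \\ mn\equiv a \pmod q}} f(m)g(n) - \frac{1}{\varphi(q)}\sum_{\chi\in\Xi_q}\chi(a)\sum_{\substack{mn\le x\\ m\in[M,2M),\,n\in[N,2N)}} f(m)g(n)\overline{\chi}(mn),
\]
in which the cross condition $mn\le x$ cuts through the box. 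Writing $\tilde f_M*\tilde g_N$ as an alternating sum of four complete convolutions $f_{M'}*g_{N'}$ gives you \eqref{FISW2}-type control of $\Delta_\Xi(\tilde f_M*\tilde g_N, 4MN; q, a)$, not of $\Delta_\Xi(\tilde f_M*\tilde g_N, x; q, a)$. The discrepancy between the two is the mass of the box lying above the hyperbola $mn=x$, which is of size $\asymp MN \asymp x$; this is not an ``absorbed excess'' but the whole problem. A trivial bound on it loses everything, and \eqref{FISW2} cannot touch it because the constraint is not separable in $m$ and $n$. You would need some additional device (a Perron-type detachment of the condition $mn\le x$, or a finer-than-dyadic slicing) to make this step rigorous, and your write-up supplies neither.

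This is precisely what the paper's proof is engineered to avoid, and it explains why its exponent is worse than yours. Instead of dyadic boxes, the paper slices $m$ into multiplicative intervals of width $(1-\Delta)^{-1}$ with $\Delta = 1/(\log x)^{D+2}$, and for $m\in((1-\Delta)M_j, M_j]$ takes all $n\le N_j := x/M_j$. Since $mn \le M_jN_j = x$ automatically, each piece $(f_{M_j}-f_{(1-\Delta)M_j})*g_{N_j}$ is a complete, untruncated convolution and \eqref{FISW4} applies directly. The pairs this misses (those with $m\in((1-\Delta)M_j,M_j]$ and $N_j < n \le x/m$) number $\ll \Delta^2 x$ per slice, hence $\ll \Delta x \log x \ll x/(\log x)^{D+1}$ in total, which is small enough to discard trivially. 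The price is that there are $J \asymp (\log x)^{D+3}$ slices, so the fixed error $x/(\log x)^B$ from \eqref{FISW4} gets paid $J$ times, yielding $x/(\log x)^{B-D-3}$ and forcing $D \le (B-3)/2$. That you arrive at the strictly stronger exponent $B-2$ is in itself a warning sign: a correct argument along dyadic lines would constitute an improvement of the lemma, and the reason you ``gain'' is exactly that you have not paid for the truncation. As a secondary point, you should also check that the Siegel--Walfisz input \eqref{FISW} is actually available at the scales you invoke it: the paper only assumes \eqref{FISW} for $N\ge\sqrt x$, whereas your use of \eqref{FISW2} on boxes with $x^{1/4}\le N < \sqrt x$ presupposes it there too.
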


\begin{proof}
The proof involves how to partition the values of $m,n$ in the sum defining $(f*g)_C$, so as to apply \eqref{FISW2}. Using the trivial bounds $F(M)\leq \sqrt{M}$ and $G(N)\leq \sqrt{N}$,  \eqref{FISW2} now reads
\begin{equation} \label{FISW4}
\sum_{\substack{ q\leq Q }}  \max_{a:\ (a,q)=1}  | \Delta_\Xi (f_M*g_N ,MN;q,a) |  \ll     \sqrt{Nx}(\log x)^2 
 +\frac{ x  }{(\log x)^{B}}
  \end{equation}
if $MN \ll x$. We apply  \eqref{FISW4} as follows for those $m\leq \sqrt{x}$ (an analogous construction works for those $n\leq \sqrt{x}$, as  well as for any overlap):\ 

First take $M_0=(\log x)^C$ and $N=x/M_0$ in \eqref{FISW4}.
Now let $\Delta=1/(\log x)^{D+2}$. With two applications of \eqref{FISW4}, we obtain \eqref{FISW4} with
$f_M$ replaced by $f_M-f_{(1-\Delta)M}$, and $g_N$ replaced by $g_N$.

We apply this with $M_j=(\log x)^C(1-\Delta)^{-j}$ and $N_j=x/M_j$, for $0\leq j \leq J$, where $J$ is the minimal integer for which $M_J\geq \sqrt{x}$ so that $J \asymp (\log x)/\Delta$. The total contribution here is
\[
\ll \sum_{j=0}^J \left(   (1-\Delta)^{j/2} \frac x{(\log x)^{C/2-2}}   +\frac{ x  }{(\log x)^{B}}\right) 
\ll  \frac{  x  }{\Delta(\log x)^{B-1}}  \ll  \frac{  x  }{ (\log x)^{D}},
\]
where the last inequality follows from our choice of $D$ and $\Delta$.

For each integer $m\in ((1-\Delta)M_j,M_j]$ we have missed out the values of $n$ in the range $(N_j,x/m]$. There are 
$\leq N_j \Delta/(1-\Delta)$ such values of $n$, for each of the $\leq \Delta M_j$ values of $m$ in the interval, a total of
$\ll \Delta^2 x$ pairs for each $j$. Therefore the total number of mixed pairs $m,n$ is 
$\ll J \Delta^2 x \ll \Delta x \log x\ll x/(\log x)^{D+1}$. This completes the proof of Lemma~\ref{lem:FISW3}.
\end{proof}


Of course we are really interested in $f*g$ not $(f*g)_C$, so now we study the difference:
For $y=x/(\log x)^C$ we have 
\begin{align*}
\Delta_\Xi (f*g ,x;q,a) &- \Delta_\Xi ((f*g)_C ,x;q,a)   \\
& = \sum_{\substack{m\leq (\log x)^C \\ (m,q)=1}}  f(m) (\Delta_\Xi (g ,x/m;q,a/m) -  \Delta_\Xi (g ,y;q,a/m) ) \\
& + \sum_{\substack{n\leq (\log x)^C \\ (n,q)=1}} g(n) (\Delta_\Xi (f ,x/n;q,a/n) -  \Delta_\Xi (f ,y;q,a/n) ),
\end{align*}
and so
\begin{align*}
&\sum_{\substack{ q\leq Q }}  \max_{a:\ (a,q)=1}  |\Delta_\Xi (f*g ,x;q,a)| 
 \leq  \sum_{\substack{ q\leq Q }}  \max_{b:\ (b,q)=1}  |\Delta_\Xi ((f*g)_C ,x;q,b) |  \\
&+ \sum_{\substack{m\leq (\log x)^C  }}  \sum_{\substack{ q\leq Q }}  \max_{c:\ (c,q)=1}  |\Delta_\Xi (g ,x/m;q,c)| 
+ \sum_{\substack{m\leq (\log x)^C  }}  \sum_{\substack{ q\leq Q }}  \max_{d:\ (d,q)=1}  |  \Delta_\Xi (g ,y;q,d) | \\
& + \sum_{\substack{n\leq (\log x)^C  }}  \sum_{\substack{ q\leq Q }}  \max_{e:\ (e,q)=1}  |\Delta_\Xi (f ,x/n;q,e)|
+ \sum_{\substack{n\leq (\log x)^C }}  \sum_{\substack{ q\leq Q }}  \max_{k:\ (k,q)=1}  | \Delta_\Xi (f ,y;q,k)| .
\end{align*}
The first term above is handled by Lemma~\ref{lem:FISW3}.
We assume that for $h=f$ and $h=g$ we have 
\[
 \sum_{\substack{ q\leq Q }}  \max_{a:\ (a,q)=1}  |\Delta_\Xi (h ,X;q,a)|  \ll \frac X{(\log x)^D \log\log x} 
 \]
for all $X$ in the range $y\leq X\leq x$, so the above becomes
\[
 \ll    \frac x{(\log x)^D} + \sum_{\substack{m\leq (\log x)^C  }} \frac {x/m}{(\log x)^D \log \log x}  \ll    \frac x{(\log x)^{D}}.
\]
We now summarize what we have proved.

\begin{proposition} \label{prop: Convol2} Fix $D \geq 0$ and let $A=14D+28,\ B=2D+3$ and $C=4D+8$. Let $x$ be large and let $\Xi$ be a set of primitive characters. Let $f$ and $g$ be given arithmetic functions with the following properties: Taking $h=f$ or $h=g$ we have that (i) Each $|h(n)|\leq 1$; (ii) If $(a,q)=1$ then 
\[
\Delta_\Xi(h,X;q,a) \ll \frac{ X} {(\log x)^A}
\]
for all $X$ in the range $x^{1/2}\leq X\leq x$; (iii) The B-V type result
\[
 \sum_{\substack{ q\leq Q }}  \max_{a:\ (a,q)=1}  |\Delta_\Xi (h ,X;q,a)|  \ll \frac X{(\log x)^D \log\log x} 
 \]
holds for all $X$ in the range $x/(\log x)^C\leq X\leq x$, where $2 \leq Q \leq \sqrt{x}/(\log x)^B$. Then
 \begin{equation*}
\sum_{\substack{ q\leq Q }}  \max_{a:\ (a,q)=1}  | \Delta_\Xi(f*g,x;q,a) | \ll \frac x {(\log x)^{D}} .
\end{equation*}
 \end{proposition}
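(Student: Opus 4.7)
The proposition is a packaging of the analysis carried out just above its statement, so the plan is essentially organizational: verify the parameter bookkeeping, apply Lemma~\ref{lem:FISW3} to the truncated convolution $(f*g)_C$, and then absorb the difference $f*g - (f*g)_C$ using the explicit decomposition written immediately before the statement.

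First I would check the arithmetic of the parameters. With $A = 14D+28$, the quantity $B = A/7-1$ appearing in Lemma~\ref{lem:FISW3} equals $2D+3$, so the hypothesis of that lemma matches the $B$ of the present statement; the associated truncation parameter $C = 2B+2$ becomes $4D+8$, matching the $C$ here; and the exponent $(B-3)/2$ in the conclusion of Lemma~\ref{lem:FISW3} becomes exactly $D$. Hypothesis (i) of the proposition supplies the $1$-boundedness, and hypothesis (ii) supplies the Siegel-Walfisz-type bound \eqref{FISW} for both $h=f$ and $h=g$ throughout the range $x^{1/2}\le X\le x$ (the factor $H(X)\ll\sqrt{X}$ in \eqref{FISW} follows from $|h(n)|\le 1$). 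Lemma~\ref{lem:FISW3} therefore yields
\[
\sum_{q\le Q} \max_{(a,q)=1} |\Delta_\Xi((f*g)_C,x;q,a)| \ll \frac{x}{(\log x)^D}.
\]

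Next I would invoke the difference identity displayed just before the statement. With $y = x/(\log x)^C$, the quantity $\Delta_\Xi(f*g,x;q,a) - \Delta_\Xi((f*g)_C,x;q,a)$ splits as a sum over $m\le (\log x)^C$ (and symmetrically over $n\le (\log x)^C$) of terms of the shape $f(m)\bigl(\Delta_\Xi(g,x/m;q,a/m) - \Delta_\Xi(g,y;q,a/m)\bigr)$. Taking maxima over $a$ and summing over $q\le Q$, each piece is dominated by $\sum_{q\le Q} \max_{(c,q)=1} |\Delta_\Xi(h,X;q,c)|$ with $X\in\{x/m,y\}$, and the inequality $m\le(\log x)^C$ forces $X\ge y$, so the range requirement of hypothesis (iii) is satisfied. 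Applying (iii) and summing in $m$ using $\sum_{m\le(\log x)^C} 1/m \ll \log\log x$, the total contribution of the difference is $\ll x/(\log x)^D$. Adding this to the contribution from Lemma~\ref{lem:FISW3} completes the proof.

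There is no substantive obstacle; the only nontrivial check is that the $\log\log x$ in the denominator of hypothesis (iii) is precisely what one needs to absorb the harmonic sum over $m\le(\log x)^C$, and that the relations $A=14D+28$, $B=2D+3$, $C=4D+8$ are exactly those demanded by Lemma~\ref{lem:FISW3}. Both have been verified above, so the proof reduces to writing out the decomposition and summing the three displayed bounds.
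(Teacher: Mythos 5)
Your proposal is correct and follows the same path as the paper: the proposition is, as you say, a bookkeeping wrapper around Lemma~\ref{lem:FISW3} and the displayed decomposition of $\Delta_\Xi(f*g) - \Delta_\Xi((f*g)_C)$ just before the statement, with hypothesis (iii) and the bound $\sum_{m\le(\log x)^C}1/m \ll \log\log x$ absorbing the extra terms. The parameter check $A=14D+28\Rightarrow B=A/7-1=2D+3$, $C=2B+2=4D+8$, $(B-3)/2=D$ matches the paper's intent, and the observation that $|h|\le 1$ lets hypothesis (ii) play the role of \eqref{FISW} (with $H(N)$ bounded by $\sqrt N$) is exactly the substitution the paper makes when passing from \eqref{FISW2} to \eqref{FISW4}.
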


\section{Proof of the uniform version of Theorem~\ref{thm: MathTheorem}(a)}\label{sec:thma}

In this section we deduce Theorem~\ref{thm:fng} from Proposition~\ref{prop: Convol2}, using the identity $\Lambda_f = g*f\log$. We start with two lemmas passing between $f$ and $f\log$.

\begin{lemma}\label{lem: Log-SW} Fix $A \geq 0$. Let $f$ be a given $1$-bounded arithmetic function, let $(a,q)=1$, and suppose we are given a set of primitive characters $\Xi$. If
\begin{equation}\label{eq: SWf}
\Delta_{\Xi}(f, X; q,a) \ll \frac{X}{(\log X)^A}
\end{equation}
for all $X$ in the range $x/(\log x)^A \leq X \leq x$, then
\begin{equation}\label{eq: SWfL}
\Delta_{\Xi}(f\log, X; q,a) \ll \frac{X}{(\log X)^{A-1}}
\end{equation}
for $X = x$. Conversely if~\eqref{eq: SWfL} holds for all $X$ in the range $x/(\log x)^A \leq X \leq x$, then~\eqref{eq: SWf} holds for $X=x$.
\end{lemma}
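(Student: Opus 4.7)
The plan is to convert between $f$ and $f\log$ via Abel's summation formula, applied in parallel to the genuine progression-sum and to the character sum defining the main term, so that the same identity holds at the level of the discrepancy $\Delta_\Xi$. Summing by parts with weight $\log n$ yields
\[
\Delta_\Xi(f\log, x; q, a) \;=\; (\log x)\,\Delta_\Xi(f, x; q, a) \;-\; \int_1^x \Delta_\Xi(f, t; q, a)\,\frac{dt}{t},
\]
and to establish~\eqref{eq: SWfL} I would bound the boundary term by~\eqref{eq: SWf} at $X=x$ (yielding exactly $x/(\log x)^{A-1}$) and then split the integral at $t_0 := x/(\log x)^A$. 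On $[t_0,x]$ the hypothesis gives $|\Delta_\Xi(f,t;q,a)| \ll t/(\log t)^A$ so the integrated contribution is $\ll x/(\log x)^A$; on $[1,t_0]$ the trivial bound $|\Delta_\Xi(f,t;q,a)| \ll t$ (from $|f|\le 1$ and moderate $|\Xi|$) integrates against $dt/t$ to contribute $\ll t_0 = x/(\log x)^A$.  Summing gives the claimed bound.

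For the converse direction, I would invert the partial summation by writing $f(n) = (f(n) \log n) \cdot (1/\log n)$ and summing by parts against $h(t) = 1/\log t$ with $h'(t) = -1/(t(\log t)^2)$. This yields the dual identity
\[
\Delta_\Xi(f, x; q, a) \;=\; \frac{\Delta_\Xi(f\log, x; q, a)}{\log x} \;+\; \int_2^x \frac{\Delta_\Xi(f\log, t; q, a)}{t (\log t)^2}\,dt \;+\; O(1),
\]
where the $O(1)$ absorbs the $n=1$ term. The boundary term is $\ll x/(\log x)^A$ by the hypothesis. Splitting the integral at $t_0$ again: on $[t_0,x]$ the hypothesis gives $|\Delta_\Xi(f\log,t;q,a)| \ll t/(\log t)^{A-1}$, contributing $\ll \int_{t_0}^x (\log t)^{-(A+1)}\,dt \ll x/(\log x)^{A+1}$, while on $[2,t_0]$ the trivial bound $|\Delta_\Xi(f\log,t;q,a)| \ll t \log t$ contributes $\ll \int_2^{t_0} dt/\log t \ll x/(\log x)^{A+1}$.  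Adding up recovers~\eqref{eq: SWf}.

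The only substantive point requiring care is the handling of the low-$t$ range where no nontrivial input is available; the weights $1/t$ and $1/(t(\log t)^2)$ in the two integrals fortunately absorb just enough of the trivial bound to match the target losses, and the split at $t_0 = x/(\log x)^A$ is calibrated precisely so that the two contributions balance.  Everything else is clerical: both the conditional progression-sum and the $\Xi$-main-term are linear functionals of the sequence $(f(n))$, so applying Abel summation to each and subtracting produces the displayed identities for $\Delta_\Xi$ itself with no additional error.
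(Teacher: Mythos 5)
Your proof is correct and follows essentially the same approach as the paper: Abel summation to relate $\Delta_\Xi(f\log)$ to $\Delta_\Xi(f)$, with the hypothesis applied on $[x/(\log x)^A, x]$ and a trivial bound absorbing the low range. The paper packages the low-range contribution as a boundary term $\Delta_\Xi(f\log, X;q,a)$ at $X = x/(\log x)^A$ rather than as the tail of the integral, but this is only a cosmetic difference in bookkeeping.
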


\begin{proof}
Let $F(n;q,a)=1_{n\equiv a \pmod q} -   \frac 1{\varphi(q)} \sum_{\substack{  \chi  \in   \Xi_q }} \chi(a\overline{n}) $, so that we have
$\Delta_\Xi(f,x;q,a) =  \sum_{n\leq x} f(n)F(n;q,a)$. Moreover
\[
\Delta_\Xi(f\log ,x;q,a)=  \sum_{n\leq x} f(n)F(n;q,a)\log n = \int_{1}^x \log t\, d\Delta_\Xi(f,t;q,a) ,
\]
by the usual technique of partial summation,
so that 
 \begin{equation}\label{eq:f-fL}
 \begin{split}
\Delta_\Xi(f\log ,x;q,a)& - \Delta_\Xi(f\log ,X;q,a)  = \Delta_\Xi(f,x;q,a) \log x 
\\ &- \Delta_\Xi(f,X;q,a) \log X -  \int_{X}^x  \Delta_\Xi(f,t;q,a) \frac{dt}t, 
 \end{split}
 \end{equation}
 where $X=x/(\log x)^A$. By~\eqref{eq: SWf} the three terms on the right hand side above are all $\ll x/(\log x)^{A-1}$. By trivially bounding each $|f(n)|$ by $1$, we obtain
 \[ |\Delta_{\Xi}(f\log, X; q,a)| \leq X \log x \leq \frac{x}{(\log x)^{A-1}}. \]
This yields the first part of the lemma. For the second part we begin with the analogous identity
\[
\Delta_\Xi(f ,x;q,a)=    \int_{2}^x \frac{1}{\log t} \, d\Delta_\Xi(f\log ,t;q,a) ,
\]
and the proof proceeds entirely analogously.
\end{proof}

 \begin{lemma}\label{lem: Log} Fix $A, C \geq 0$. Let $f$  be a given $1$-bounded arithmetic function, $2\leq Q\leq x/(\log x)^{A+C/2+1}$, and suppose we are given a set of primitive characters, $\Xi$, containing $\leq (\log x)^C$ elements. If  
\begin{equation} \label{eq: BVf}
\sum_{\substack{ q\leq Q }}  \max_{a:\ (a,q)=1}  | \Delta_\Xi(f,X;q,a) |  \ll \frac X {(\log X)^A}
\end{equation}
for all $X$ in the range $x/(\log x)^{A+C/2+1}\leq X\leq x$ then
\begin{equation} \label{eq: BVfL}
\sum_{\substack{ q\leq Q }}  \max_{a:\ (a,q)=1}  | \Delta_\Xi(f\log, X;q,a) |  \ll \frac X {(\log X)^{A-1}}
\end{equation}
for $X=x$. Conversely if \eqref{eq: BVfL} holds for all $X$ in the range $x/(\log x)^{A+C/2+1}\leq X\leq x$ then
\eqref{eq: BVf} holds for $X=x$.
\end{lemma}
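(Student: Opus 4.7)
My strategy is to mirror the proof of Lemma~\ref{lem: Log-SW}, but with the extra complication of the outer sum over $q$ and inner maximum over $a$. Setting $X_0 = x/(\log x)^{A+C/2+1}$, the Abel summation identity behind~\eqref{eq:f-fL} applies at each fixed pair $(q,a)$ and yields
\[
\Delta_\Xi(f\log,x;q,a) = \Delta_\Xi(f\log,X_0;q,a) + \Delta_\Xi(f,x;q,a)\log x - \Delta_\Xi(f,X_0;q,a)\log X_0 - \int_{X_0}^x \Delta_\Xi(f,t;q,a)\frac{dt}{t}.
\]
First I would take absolute values termwise, push the maximum over $a$ with $(a,q)=1$ inside (using $\max_a|\int|\le\int\max_a|\cdot|$), and then sum over $q\le Q$. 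The three terms involving $\Delta_\Xi(f,\cdot;q,a)$ are controlled directly by the hypothesis~\eqref{eq: BVf} (applied at $X=x$, at $X=X_0$, and integrated over $t\in[X_0,x]$), and together they contribute a total of $O(x/(\log x)^{A-1})$.

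The hard part is bounding the leftover term $\sum_{q\le Q}\max_a|\Delta_\Xi(f\log,X_0;q,a)|$, on which we have no hypothesis. A termwise trivial bound would estimate each character sum $|S_{f\log}(X_0,\chi)|$ by $X_0\log X_0$, producing a factor of $|\Xi|=(\log x)^C$; this is too lossy by a factor of $(\log x)^{C/2}$. The key device I would use is Cauchy-Schwarz combined with Plancherel for the characters mod $q$:
\[
\frac{1}{\varphi(q)}\Bigl|\sum_{\chi\in\Xi_q}\chi(a)S_{f\log}(X_0,\chi)\Bigr| \le \frac{|\Xi_q|^{1/2}}{\varphi(q)^{1/2}}\,X_0^{1/2}\log X_0,
\]
so that after summing over $q\le Q$ using $|\Xi_q|\le(\log x)^C$ and $\sum_{q\le Q}\varphi(q)^{-1/2}\ll Q^{1/2}(\log x)^{O(1)}$, the total becomes of size $(\log x)^{C/2+1}(QX_0)^{1/2}$. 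The assumed bounds $Q,X_0\le x/(\log x)^{A+C/2+1}$ are \emph{exactly} what is needed to make this $\ll x/(\log x)^{A-1}$, so the exponent $C/2$ (rather than $C$) in $A+C/2+1$ is forced by the square root in Cauchy-Schwarz. The corresponding arithmetic-progression piece $\sum_q(X_0/q+1)\log x\ll X_0(\log x)^2+Q\log x$ is easily of smaller order.

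The converse direction is symmetric: Abel summation against the weight $1/\log t$ gives
\[
\Delta_\Xi(f,x;q,a) = \frac{\Delta_\Xi(f\log,x;q,a)}{\log x} + \int_2^x \frac{\Delta_\Xi(f\log,t;q,a)}{t(\log t)^2}\,dt + O(1),
\]
and I would split the integral at $X_0$. For $t\in[X_0,x]$ the hypothesis~\eqref{eq: BVfL} on $f\log$ applies; for $t\in[2,X_0]$ I would reuse the Cauchy-Schwarz trivial bound above on $\sum_q\max_a|\Delta_\Xi(f\log,t;q,a)|$, integrated against $dt/(t(\log t)^2)$, where the weight $1/(\log t)^2$ is integrable enough to give the required saving. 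Both directions thus reduce to the same Cauchy-Schwarz estimate at the trivial end, with $Q$ and $X_0$ tuned exactly to balance it.
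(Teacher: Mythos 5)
Your overall scaffold---apply \eqref{eq:f-fL} at $X_0 = x/(\log x)^{A+C/2+1}$, dispatch the three hypothesis-covered terms, and separately estimate the leftover $\sum_{q\le Q}\max_a|\Delta_\Xi(f\log,X_0;q,a)|$---matches the paper exactly, and the converse-via-Abel-summation sketch is also the paper's. The gap is in the ``key device'': the inequality
\[
\frac{1}{\varphi(q)}\Bigl|\sum_{\chi\in\Xi_q}\chi(a)S_{f\log}(X_0,\chi)\Bigr| \le \frac{|\Xi_q|^{1/2}}{\varphi(q)^{1/2}}\,X_0^{1/2}\log X_0
\]
is simply false. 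Take $f\equiv 1$, $\Xi=\{1\}$ (so $\Xi_q=\{\chi_0\}$, $|\Xi_q|=1$) and $q$ small: the left side is $\asymp \frac{X_0\log X_0}{\varphi(q)}$ while the right side is $\asymp\frac{X_0^{1/2}\log X_0}{\varphi(q)^{1/2}}$, so your claim would require $X_0\le\varphi(q)$. Cauchy--Schwarz plus Parseval cannot produce a factor $X_0^{1/2}$: Parseval for Dirichlet characters gives $\frac{1}{\varphi(q)}\sum_{\chi\pmod q}|S_{f\log}(X_0,\chi)|^2 = \sum_{(b,q)=1}\bigl|\sum_{n\le X_0,\,n\equiv b}f(n)\log n\bigr|^2$, and the right side is genuinely of order $(X_0\log X_0)^2/q$ in general (it is \emph{not} $\sum_n|f(n)\log n|^2$). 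The honest consequence of CS+Parseval is $\frac{|\Xi_q|^{1/2}}{\varphi(q)}X_0\log X_0$, i.e.\ a square-root saving in $|\Xi_q|$, not in $X_0$.

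This also explains why your intuition about where the exponent $C/2$ comes from is off. The paper uses the crude bound $|\Delta_\Xi(f\log,X_0;q,b)|\le(1+|\Xi_q|)\frac{X_0\log X_0}{q}$ and sums over $q$; the $C/2$ then emerges from the estimate
\[
\sum_{\chi\!\!\!\pmod r\,\in\,\Xi}\frac{1}{r}\ \ll\ (\log x)^{C/2},
\]
which holds because the number of primitive characters of conductor $\le R$ is $O(R^2)$, so a set of $(\log x)^C$ primitive characters must have conductors spreading up to $\gg(\log x)^{C/2}$. The square root is thus a fact about the density of primitive characters, not about Cauchy--Schwarz. (As it happens, if you replace your false inequality by the correct $\frac{|\Xi_q|^{1/2}}{\varphi(q)}X_0\log X_0$, the computation still closes---in fact with a marginally better exponent---but as written the key step fails.)
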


\begin{proof} 
As before we use~\eqref{eq:f-fL} but now with $X=x/(\log x)^{A+C/2+1}$. This implies that 
 \begin{align*}  
 \sum_{\substack{ q\leq Q }}  & \max_{a:\ (a,q)=1}  | \Delta_\Xi(f\log, x;q,a) | 
  \leq  \sum_{\substack{ q\leq Q }}  \max_{b:\ (b,q)=1}  | \Delta_\Xi(f\log, X;q,b) |  \\
 & +  \sum_{\substack{ q\leq Q }}  \max_{c:\ (c,q)=1} |\Delta_\Xi(f,x;q,c)| \log x
 +  \sum_{\substack{ q\leq Q }}  \max_{d:\ (d,q)=1} |\Delta_\Xi(f,X;q,d)| \log X\\
 &\hskip 1.5in +\int_{X}^x  \sum_{\substack{ q\leq Q }}  \max_{e:\ (e,q)=1} |\Delta_\Xi(f,t;q,e) | \frac{dt}t.
  \end{align*}
 By \eqref{eq: BVf} the last three terms are
 \begin{align*}  
 \ll_A  \frac x {(\log x)^{A-1}} +\int_{X}^x  \frac {dt} {(\log t)^{A}}  \ll_A  \frac x {(\log x)^{A-1}}.
  \end{align*}
Now, by trivially bounding each $|f(n)|$ by 1, we obtain
\[ | \Delta_\Xi(f\log, X;q,b) | \leq (1+|\Xi_q|) \frac{X\log X}q , \]
and so 
 \begin{align*}
 \sum_{\substack{ q\leq Q }}  \max_{b:\ (b,q)=1} &  | \Delta_\Xi(f\log, X;q,b) |  \leq 
  \sum_{\substack{ q\leq Q }} (1+|\Xi_q|) \frac{X\log X}q \\
  & \ll X(\log x)^2 + \sum_{\chi \pmod r \in \Xi}  \  \sum_{\substack{ q\leq Q \\ r|q }}   \frac{X\log X}q\\
  & \ll \left( 1 + \sum_{\chi \pmod r \in \Xi} \frac 1r \right) X(\log x)^2  \ll X(\log x)^{C/2+2}  \ll_A  \frac x {(\log x)^{A-1}}.
\end{align*}
which yields the first part of the lemma. The proof of the second part is again analogous.
\end{proof}

 \begin{proof} [Proof of Theorem \ref{thm:fng}] 
 Let $(\log x)^{-1}f\log$ denote the function $n \rightarrow (\log x)^{-1}f(n)\log n$. By the B-V type assumption on $f$, the first part of Lemma~\ref{lem: Log} implies that 
 \begin{equation*} 
\sum_{\substack{ q\leq Q }}  \max_{a:\ (a,q)=1}  | \Delta_\Xi((\log x)^{-1} f \log,X;q,a) |  \ll \frac X {(\log X)^{A+\ee}},
\end{equation*}
for all $X$ in the range $x/(\log x)^{5A+C/2+9}\leq X\leq x$. The same holds with $(\log x)^{-1} f\log$ above replaced by $g$.

By the S-W type assumption on $f$, the first part of Lemma~\ref{lem: Log-SW} implies that if $(a,q)=1$ then
  \begin{equation*} 
  \Delta_\Xi((\log x)^{-1} f \log,X;q,a)   \ll \frac X {(\log X)^{14A+28}},
\end{equation*}
for all $X$ in the range $x^{0.45}\leq X\leq x$. The same holds with $(\log x)^{-1} f\log$ above replaced by $g$.
 
 The coefficients of  $-F'(s)/F(s)= G(s)\cdot (-F'(s))$ yield the identity 
 $\Lambda_f  =   g* f \log$. Therefore by Proposition \ref{prop: Convol2} applied to the $1$-bounded functions $g$ and $(\log x)^{-1}f\log$, we obtain
\begin{equation*}
\sum_{\substack{ q\leq Q }}  \max_{a:\ (a,q)=1}  | \Delta_\Xi(\Lambda_f,X;q,a) | \ll \frac X {(\log X)^{A-1}} ,
\end{equation*}
for all $X$ in the range $x/(\log x)^{A+C/2+1}\leq X\leq x$.  

The contribution of the prime powers $p^k$ with $k\geq 2$ does not come close to the upper bound, and so 
\begin{equation*}
\sum_{\substack{ q\leq Q }}  \max_{a:\ (a,q)=1}  | \Delta_\Xi((f\cdot 1_{\mathcal P})\log ,X;q,a) | \ll_A \frac X {(\log X)^{A-1}} ,
\end{equation*}
for all $X$ in the range $x/(\log x)^{A+C/2+1}\leq X\leq x$.  Finally the second part of Lemma~\ref{lem: Log} implies 
\eqref{eq:BVI2}. 
  \end{proof}

\section{Factorizing smooth numbers and using the large sieve}\label{sec:thmb-part1}

We develop an idea of Harper \cite{Har} to prove the following result, which will yield rich consequences in the next section.

\begin{proposition}\label{prop:factorize2}
Let $ 2 \leq y \leq x$ be large. Let completely multiplicative $f\in \mathcal C$ be supported on $y$-smooth integers. Let $2 \leq D,Q \leq x$. Then
\[ \sum_{q \leq Q} \frac{1}{\varphi(q)} \sum_{\substack{\chi\pmod{q} \\ \cond(\chi) > D}} |S_f(x,\chi)| \ll 
\left( Q x^{1/2}   + x^{7/8} + \frac{x}{D}\right) (\log x)^{4} + (xy)^{1/2}    (\log x)^{3}. \]
\end{proposition}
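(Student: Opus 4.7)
The plan is to combine Harper's greedy factorization of $y$-smooth integers with the large sieve for primitive Dirichlet characters. First I would reduce to sums over primitive characters. Each $\chi$ mod $q$ with $\cond(\chi)=q^{*}>D$ is induced by a primitive $\chi^{*}$ mod $q^{*}$, and $|S_{f}(x,\chi) - S_{f}(x,\chi^{*})|$ is bounded by the contribution of $n \leq x$ with $(n,q^{*})=1$ but $(n,q/q^{*})>1$, which is $\ll x \sum_{p \mid q/q^{*},\, p \leq y} 1/p$. Summing over $q \leq Q$ with $q^{*} \mid q$ and using $\sum_{q^{*}\mid q,\, q\leq Q} 1/\varphi(q) \ll (\log x)/\varphi(q^{*})$, the total error is $\ll (x/D)(\log x)^{4}$, matching the third term in the target bound. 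We are left with estimating
\[
(\log x) \sum_{D < q^{*} \leq Q} \frac{1}{\varphi(q^{*})} \sum_{\chi^{*} \!\!\!\pmod{q^{*}}}^{\text{prim}} |S_{f}(x,\chi^{*})|.
\]

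For a parameter $U$ to be chosen, split $S_{f}(x,\chi^{*}) = S^{\leq U} + S^{>U}$. The short sum $S^{\leq U}$ has at most $\Psi(U,y) \leq U$ nonzero terms, so I would bound it via Cauchy-Schwarz over $(q^{*},\chi^{*})$ combined with the large sieve applied to a sum of length $U$. For $S^{>U}$, the key ingredient from Harper is that every $y$-smooth $n > U$ admits a unique \emph{greedy} factorization $n = ab$: writing $n = p_{1}^{e_{1}}\cdots p_{k}^{e_{k}}$ with $p_{1}<\cdots<p_{k}$, one chooses the smallest $j$ and $f \in [1,e_{j}]$ such that $a := p_{1}^{e_{1}}\cdots p_{j-1}^{e_{j-1}} p_{j}^{f} > U$, and sets $b := n/a$. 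Then $a \in (U,Uy]$, $a/P^{+}(a) \leq U$, and $P^{+}(a) = p_{j} \leq P^{-}(b)$. This yields the bilinear decomposition
\[
S^{>U}_{f}(x,\chi^{*}) = \sum_{U < a \leq Uy} \alpha_{a}\chi^{*}(a) \sum_{\substack{b \leq x/a \\ P^{-}(b) \geq P^{+}(a)}} f(b)\chi^{*}(b),
\]
where $\alpha_{a} := f(a)\,\1_{a/P^{+}(a) \leq U}$. Both coefficients have absolute value $\leq 1$; the $\ell^{2}$ norms satisfy $\|\alpha\|_{2}^{2} \leq Uy$ and the inner sum has at most $x/U$ terms, giving product of $\ell^{2}$ norms $\leq \sqrt{xy}$, which is the origin of the $\sqrt{xy}(\log x)^{3}$ term.

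To control this bilinear form I would apply Cauchy-Schwarz over $(q^{*},\chi^{*})$ together with the multiplicative large sieve $\sum_{q \leq Q}(q/\varphi(q)) \sum_{\chi}^{\text{prim}} |\sum_{n}a_{n}\chi(n)|^{2} \ll (Q^{2}+N)\|a\|_{2}^{2}$. The joint conditions $P^{+}(a) \leq P^{-}(b)$ and $ab \leq x$ are decoupled by a dyadic partition over the common prime $p = P^{+}(a)$ and by a Perron/Mellin separation of the size constraint $ab \leq x$, each at a cost of at most $O(\log x)$ factors. Applying the large sieve to the resulting uncoupled bilinear sum produces cross-terms depending on $Q^{2}$ and the bilinear sizes $Uy$ and $x/U$; combined with $\|\alpha\|_{2}\|\beta\|_{2} \leq \sqrt{xy}$, these give contributions of size roughly $Qx^{1/2}$, $\sqrt{xy}$, and cross-terms of the shape $\sqrt{Q^{2}\cdot Uy}\cdot\sqrt{x/U}$. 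Optimizing $U$ across the relevant regimes of $Q$ and $y$ matches these to the three advertised main contributions $Qx^{1/2}(\log x)^{4}$, $x^{7/8}(\log x)^{4}$ and $\sqrt{xy}(\log x)^{3}$. The principal obstacle will be the combinatorial bookkeeping: verifying uniqueness of the greedy factorization, decoupling the $P^{+}(a) \leq P^{-}(b)$ constraint while losing only logarithmic factors, and choosing $U$ so that the $x^{7/8}$ secondary term emerges from the correct balance of the $Q^{2}$ contribution of the large sieve against the bilinear sizes $Uy$ and $x/U$.
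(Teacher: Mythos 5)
Your overall strategy---greedy factorization plus bilinear large sieve---is the right one and is indeed the engine behind this Proposition, but as written the argument only reproduces the weaker Proposition~\ref{prop:factorize}, and there is also a problem in the reduction to primitive characters.

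The central gap is in the optimization over $U$. After dyadic decomposition, the Cauchy--Schwarz/large-sieve bound for a block $A \leq a < 2A$, $B \leq b < 2B$ (with $AB \leq x$) produces the four pieces $\frac{1}{R}(A^{1/2}+R)(B^{1/2}+R)(AB)^{1/2}$. The $R$-term and the $\frac1R$-term give, after summing $R$ over $[D,Q]$, the contributions $Qx^{1/2}$ and $x/D$; but the \emph{cross terms} are $AB^{1/2}$ and $A^{1/2}B$, not the product of $\ell^2$ norms. Bounding $AB^{1/2}=(AB)^{1/2}A^{1/2}\le x^{1/2}(Uy)^{1/2}$ and $A^{1/2}B=(AB)/A^{1/2}\le x/U^{1/2}$ and optimizing over $U$ gives $\min_U\max\{x^{1/2}(Uy)^{1/2},\,x/U^{1/2}\}=x^{3/4}y^{1/4}$ at $U=(x/y)^{1/2}$. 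This is exactly the $x^{1/2}(xy)^{1/4}$ term of Proposition~\ref{prop:factorize}, and no choice of a single splitting parameter $U$ in the Harper factorization can do better. In particular, the claimed expression ``$\sqrt{Q^2\cdot Uy}\sqrt{x/U}=Q\sqrt{xy}$'' is not a term the large sieve produces, and the statement that a product of $\ell^2$ norms bounded by $\sqrt{xy}$ is ``the origin of the $\sqrt{xy}(\log x)^3$ term'' conflates the $\ell^2$ bound with the actual cross-term sizes. To reach $(xy)^{1/2}$ one needs the additional idea used in the paper: first prove the $x^{3/4}y^{1/4}$ bound, then split $f=g+h$ with $g$ supported on primes $\le 2\sqrt{x}$ (so the weaker bound applied to $g$, with $y\mapsto 2\sqrt x$, yields $x^{7/8}$) and $h$ supported on $n=up$ with $u<\tfrac12\sqrt x$, $p\in(2\sqrt x,y]$. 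For $h$ the bilinear shape is fundamentally different: in the dyadic block $u\sim U$, $p\sim V$ one has $U\le\sqrt x/4$ and $V\le y$, so the dangerous cross term $U^{1/2}V=(UV)^{1/2}V^{1/2}\le X^{1/2}y^{1/2}$ is bounded using the \emph{upper} bound $V\le y$ on the prime range---a feature absent from the greedy factorization because there $b$ can be as large as $x/U\gg y$.

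A secondary gap is the reduction to primitive characters. You discard the difference $S_f(x,\chi)-S_f(x,\chi^*)$, bounded by $x\sum_{p\mid q/q^*,\,p\le y}1/p$. Summing this over the $\asymp\varphi(q^*)$ primitive characters of a given conductor $q^*>D$ and then over $q\le Q$ divisible by $q^*$ produces $\ll x(\log Q)^2\sum_{D<q^*\le Q}1\ll xQ(\log Q)^2$, which is vastly larger than the target; the saving $1/D$ you assert does not appear. The paper avoids this by not treating the difference as an error at all: writing $f\overline\chi=h*f\overline\psi$ with $h$ multiplicative, $|h|\le 1$, supported on primes dividing $q$ but not $r=\cond(\chi)$, one gets the exact identity $S_f(x,\chi)=\sum_m h(m)S_f(x/m,\psi)$, and then the whole machinery (large values of $m$ disposed of by Cauchy--Schwarz and the large sieve; small $m$ fed into the factorization argument with $X=x/m$) is run for each $m$. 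You would need to adopt this decomposition rather than the ``drop the imprimitive part'' shortcut.
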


To prove this we begin by proving a marginally weaker result (weaker in the sense that one only saves $(x/y)^{1/4}$ from the trivial bound instead of potentially saving $(x/y)^{1/2}$ in Proposition~\ref{prop:factorize2}).

\begin{proposition}\label{prop:factorize}
Let $2 \leq y \leq x$ be large. Let completely multiplicative $f\in \mathcal C$ be supported on $y$-smooth integers. Let $2 \leq D,Q \leq x$. Then
\[ \sum_{q \leq Q} \frac{1}{\varphi(q)} \sum_{\substack{\chi\pmod{q} \\ \cond(\chi) > D}} |S_f(x,\chi)| \ll x^{1/2}\left(Q + (xy)^{1/4} + \frac{x^{1/2}}{D}\right) (\log x)^{5}. \]
\end{proposition}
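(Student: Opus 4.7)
The plan is to combine Harper's factorization of $y$-smooth integers with the multiplicative large sieve inequality for primitive characters. First, reduce to primitive characters: each $\chi \pmod q$ with $\cond(\chi) > D$ is induced by a unique primitive $\chi^* \pmod{q^*}$ with $D < q^* \leq q$, and the difference $S_f(x,\chi) - S_f(x,\chi^*)$ is controlled by a trivial estimate on the contribution of $n$ with $(n,q) > 1$ but $(n,q^*) = 1$. Swapping summations and using $\sum_{q \leq Q,\, q^* \mid q} 1/\varphi(q) \ll \log(Q/q^*)/\varphi(q^*)$ reduces the sum on the left-hand side, up to a factor of $\log Q$ and an admissible error, to
\[
  \sum_{D < q^* \leq Q} \frac{1}{\varphi(q^*)} \sum_{\chi^* \pmod{q^*}}^{*} |S_f(x,\chi^*)|.
\]

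Next apply Harper's factorization. Let $A_0$ be a parameter, and write $p^+(n)$ and $p^-(n)$ for the largest and smallest prime factor of $n$. For a $y$-smooth integer $n > A_0$ with prime factors in non-decreasing order $n = \ell_1 \cdots \ell_k$, let $j$ be minimal with $a := \ell_1 \cdots \ell_j > A_0$ and set $b := n/a$. Then $A_0 < a \leq A_0 y$, $p^-(b) \geq p^+(a)$, and complete multiplicativity gives $f(n) = f(a)f(b)$ and $\chi^*(n) = \chi^*(a)\chi^*(b)$. After isolating the small contribution from $n \leq A_0$ and dyadically decomposing in the sizes of $a$, of $b$, and of $p^+(a)$ (the last in dyadic groups, to avoid a $\pi(y)$ loss), one expresses $S_f(x,\chi^*)$ as $O((\log x)^3)$ bilinear forms $S_\alpha(\chi^*) S_\beta(\chi^*)$ where $\alpha, \beta$ are $1$-bounded and supported on dyadic intervals $(A, 2A]$ and $(B, 2B]$ with $AB \ll x$.

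For each bilinear piece, apply Cauchy--Schwarz in $\chi^*$ combined with the multiplicative large sieve inequality for primitive characters,
\[
  \sum_{q \leq Q} \frac{q}{\varphi(q)} \sum_{\chi \pmod q}^{*} \Bigl| \sum_{n \leq L} a_n \chi(n) \Bigr|^2 \ll (L + Q^2) \|a\|_2^2,
\]
separately to $S_\alpha$ and $S_\beta$. The conductor restriction $q^* > D$ is exploited via $1/\varphi(q^*) \leq (q^*/\varphi(q^*))/D$, introducing a factor of $1/D$ in the contribution producing the $x/D$ term, while the bilinear contributions giving rise to $Qx^{1/2}$ and $x^{3/4}y^{1/4}$ are estimated without the conductor saving. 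Choosing $A_0 = \sqrt{x/y}$ makes $A + B$ of order at most $\sqrt{xy}$ uniformly across the dyadic range $A \in [A_0, A_0 y]$, so the bilinear bound $\sqrt{AB(AB + Q^2(A+B) + Q^4)}$ from expanding $(A+Q^2)(B+Q^2)$ aggregates, after summing the $O((\log x)^3)$ dyadic pieces, to the three stated terms within the $(\log x)^5$ budget.

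The main obstacle I anticipate is arranging the large-sieve application and the allocation of the $1/D$ saving so that each of the three claimed terms appears with the correct dependence on $Q$ and $D$; in particular, ensuring that the bilinear-form contributions producing $Qx^{1/2}$ and $x^{3/4}y^{1/4}$ are not inflated by extra factors of $Q$ or $D^{-1/2}$. A secondary technicality is handling the couplings $p^+(a) \leq p^-(b)$ and $ab \leq x$ between the two halves of Harper's factorization without a prohibitive logarithmic or $\pi(y)$ loss; the first is absorbed by dyadic decomposition in $p^+(a)$, and the second by the dyadic decomposition in $B$.
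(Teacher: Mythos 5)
Your overall strategy matches the paper's: Harper's balanced factorization of $y$-smooth integers, dyadic decomposition, Cauchy--Schwarz, and the large sieve over primitive characters, with the $1/D$ saving coming from the conductor constraint. However, two of the steps you describe do not go through as stated, and in the paper each is replaced by genuinely nontrivial machinery.

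\emph{Reduction to primitive characters.} A trivial estimate on $S_f(x,\chi) - S_f(x,\chi^*)$ is not admissible. The number of $n\leq x$ with $(n,q)>1$ but $(n,q^*)=1$ is of size up to $x\sum_{p\mid q,\, p\nmid q^*}1/p$; summing $1/\varphi(q)$ over $q\leq Q$ with $q^*\mid q$, and over the $\leq\varphi(q^*)$ primitive characters of each conductor $q^*\in(D,Q]$, this error is $\ll xQ\log\log Q$ (using $\sum_{d\mid q}\varphi(d)=q$), which exceeds the target $x^{1/2}\big(Q+\cdots\big)(\log x)^5$ by roughly a factor of $x^{1/2}$. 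The paper instead exploits complete multiplicativity: with $h$ the multiplicative function supported on primes dividing $q$ but not $r=\cond(\chi)$, defined by $h(p^k)=(g\overline{\psi})(p^k)$ where $g$ is the Dirichlet inverse of $f$ and $\psi$ the inducing primitive character, one has $f\overline{\chi} = h * f\overline{\psi}$, hence $S_f(x,\chi)=\sum_m h(m)\,S_f(x/m,\psi)$. Crucially $\rad(m)\mid q$, so summing $1/\varphi(q)$ over $q\leq Q$ divisible by $r\cdot\rad(m)$ produces a $1/\varphi(\rad(m))$ weight that controls the $m$-sum; the tail $m\geq x/y$ is then handled separately by Cauchy--Schwarz and the large sieve.

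\emph{Decoupling the bilinear form.} Dyadic decomposition does not, by itself, separate the cross-conditions $ab\leq x$ and $p^+(a)\leq p^-(b)$. When $a\in(A,2A]$, $b\in(B,2B]$ with $AB\asymp x$, or when $p^+(a)$ and $p^-(b)$ lie in coinciding or adjacent dyadic ranges, the inner sum over $(a,b)$ is still not a product of a sum over $a$ and a sum over $b$ --- yet a genuinely separated bilinear form $S_\alpha(\chi^*)S_\beta(\chi^*)$ with $\chi^*$-independent, $1$-bounded coefficients is exactly what you need before applying Cauchy--Schwarz and the large sieve to each factor. You acknowledge the couplings but assert they are ``absorbed by dyadic decomposition''; that is not so. The paper removes both couplings with two applications of Perron's formula, replacing $\1_{uv\leq X}$ and $\1_{P_+(u)\leq P_-(v)}$ by short contour integrals and folding the resulting weights $u^{-s_1}$, $P_+(u)^{-s_2}$ (resp.\ the $v$-analogues) into still-$1$-bounded coefficients $a(s_1,s_2;\cdot)$, $b(s_1,s_2;\cdot)$, at the price of a $(\log T)^2\ll(\log x)^2$ factor. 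Without this step the boundary dyadic pieces --- which are the dominant ones --- are simply not in the bilinear form you describe.
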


\begin{proof}
Suppose that $\chi\pmod{q}$ is induced by $\psi\pmod{r}$. Let $h(.)$ be the multiplicative function which is supported only on powers of the primes $p$   which   divide  $q$ but not $r$, and then let $h(p^k)=(g\overline{\psi})(p^k)$ where $g$ is the convolution inverse of $f$. Then 
$f\overline{\chi} = h* f\overline{\psi}$ and so
\[
 S_f(x,\chi)= \sum_{m\geq 1}  h(m) S_f(x/m,\psi). 
 \]
As each $|h(m)|\leq 1$ we deduce that
\[ 
|S_f(x,\chi)| \leq \sum_{\substack{m\geq 1 \\ M| q \\ (M,r)=1}} |S_f(x/m,\psi)|, 
\]
where $M=\prod_{p|m} p$.
Now, we wish to bound
\[ 
\sum_{q \leq Q} \frac{1}{\varphi(q)}\sum_{\substack{r|q \\ r> D}}     \sum_{\substack{\psi\pmod{r}  \text{ primitive} \\ \chi \text{ induced by } \psi}} |S_f(x,\chi)| 
\]
which, writing  $q=rMn$,  is 
\[
\leq 
\sum_{D < r \leq Q}     \sum_{\substack{\psi\pmod r \\ \psi\text{ primitive}}}  \sum_{\substack{m\geq 1 \\  (M,r)=1}} |S_f(x/m,\psi)| \sum_{rMn \leq Q} \frac{1}{\varphi(rMn)};
\]
and this is 
\begin{equation}\label{eq:factorize0} 
\ll (\log Q) \sum_{m \leq x} \frac{1}{\varphi(M)} \sum_{D < r \leq Q} \frac{1} {\varphi(r)}
\sum_{\substack{\psi\pmod r \\ \psi\text{ primitive}}} |S_f(x/m,\psi)|  . 
\end{equation}
Let us first dispose of large values of $m$. For fixed $m$, the sum over $r$ and $\psi$ can be bounded using Cauchy-Schwarz by
\[  \left(\sum_{r \leq Q} \sum_{\substack{\psi\pmod r \\ \psi\text{ primitive}}} \frac{1}{r\varphi(r)}\right)^{1/2} \left(\sum_{r \leq Q} \frac{r}{\varphi(r)} \sum_{\substack{\psi\pmod r \\ \psi\text{ primitive}}} |S_f(x/m,\psi)|^2\right)^{1/2}. \]
The   sum in the first bracket is $\ll \log Q$, and the sum in the second  bracket is $\ll (\frac xm + Q^2)\frac xm$ by the large sieve. Thus the contributions to~\eqref{eq:factorize0} from those $m \geq M_0$, where $M_0 = x/y$, is 
\begin{align*}
& \ll (\log Q)^{3/2} \sum_{M_0 \leq m \leq x} \frac{1}{\varphi(M)} \left(\frac{x}{m} + \frac{Qx^{1/2}}{m^{1/2}}\right) \\
& \ll x^{1/2}\left(\frac{x^{1/2}}{M_0} + \frac{Q}{M_0^{1/2}}\right) (\log Q)^{3/2} \ll (y+Qy^{1/2}) (\log Q)^{3/2},
\end{align*}
which is acceptable. 

Now fix   $m \leq M_0$, and write $X = x/m$ so that $X \geq x/M_0 = y$. 
Set $V_0 = (X/y)^{1/2}$ so that $V_0 \geq 1$. For every $y$-smooth integer $V_0 < n \leq X$, we have a unique factorization $n = uv$ with the properties that
\[ P_+(u)\leq P_-(v), \ \ v > V_0, \ \ v/P_-(v) \leq V_0. \]
This can be achieved by putting prime factors of $n$ into $v$ in descending order, until the size of $v$ exceeds $V_0$ for the first time. Thus
\begin{equation} \label{eq: doublesum}
S_f(X, \psi) = S_f(V_0,\psi) + \sum_{\substack{V_0 < v \leq  yV_0 \\ v/P_-(v) \leq V_0}} \sum_{\substack{u \leq X/v \\ P_+(u) \leq P_-(v)}} f(uv) \overline{\psi}(uv). 
\end{equation}
The $u$-summation has length at least $X/yV_0 = V_0$, which explains the choice of $V_0$. 

Each $|S_f(V_0,\psi)|\leq V_0$, and so the 
  contribution of these terms to \eqref{eq:factorize0} is $\ll   QV_0$ for fixed $m$, giving in total
\[
\ll Q \log Q \sum_{m\leq M_0} \frac{1}{\varphi(M)} \left( \frac{x/m} y\right)^{1/2} \ll \frac{Qx^{1/2} \log Q}{y^{1/2}},
\]
which is again acceptable.

 To analyze the double sum over $u,v$, we first dyadically divide the ranges of $u,v,P_+(u),P_-(v)$. For parameters $U,V,P_+,P_-$ (which can all be taken to be powers of $2$) satisfying
\begin{equation}\label{eq:factorize-ranges} 
U,V \leq X/V_0, \ \ V > V_0, \ \ UV \leq X, \ \ 2 \leq P_+,P_- \leq y,  \ \ P_+ < 2P_-,
\end{equation}
consider the double sum
\begin{equation}\label{eq:factorize2} 
\sum_{\substack{V_0<v \leq yV_0 \\ V \leq v < 2V \\ v/P_-(v) \leq V_0 \\ P_- \leq P_-(v) < 2 P_-}} \sum_{\substack{u \leq X/v \\ U \leq u <  2U \\ P_+(u) \leq P_-(v) \\ P_+ \leq P_+(u) < 2P_+}} f(uv) \overline{\psi}(uv). 
\end{equation}

For the moment, let us pretend that the ``cross conditions" $uv \leq X$ and $P_+(u) \leq P_-(v)$ are not there (for example when $4UV \leq X$ and $2P_+ \leq P_-$), so that the variables $u,v$ are completely separated and~\eqref{eq:factorize2} takes the form
\begin{equation}\label{eq:double-sum} 
\left(\sum_{U \leq u < 2U} a(u) \overline{\psi}(u)\right) \left(\sum_{V \leq v < 2V} b(v) \overline{\psi}(v) \right), \end{equation}
for some $|a(u)| \leq 1$ and $|b(v)| \leq 1$ (which depend on $f$ but not on $\psi$). By Cauchy-Schwarz and then the large sieve inequality, we obtain
\begin{align} \sum_{R < r \leq 2R} & \frac{1}{\varphi(r)} \sum_{\substack{\psi\pmod r\\ \psi\text{ primitive}}} \left|\sum_{U \leq u < 2U} a(u) \overline{\psi}(u) \right| \left|\sum_{V \leq v < 2V} b(v) \overline{\psi}(v) \right|  \nonumber \\ 
& \ll \frac{1}{R} (U^{1/2}+R) (V^{1/2}+R) (UV)^{1/2} .\label{R-Bound}
\end{align}
Summing this up over $R$'s in the interval $[D,Q]$ and the various possibilities given by~\eqref{eq:factorize-ranges}, we get
\begin{align*} 
& \ll \left( \frac{X\log X}D + \frac X {V_0^{1/2}} + QX^{1/2} \log X  \right) (\log y)^2\\
& \ll \left( \frac{X\log X}D + X^{3/4}y^{1/4} + QX^{1/2} \log X  \right) (\log y)^2.
\end{align*}
Summing this over $m$, taking $X=x/m$ we have a contribution
\[ \ll \left( \frac{x}D + x^{3/4}y^{1/4} + Qx^{1/2}  \right) (\log x)^4, \]
to \eqref{eq:factorize0}, as $Q,y\leq x$, which is again acceptable.

To deal with the restrictions $uv \leq X$ and $P_+(u) \leq P_-(v)$ (when necessary), we use Perron's formula in the form
\[ \1_{uv \leq X} = \frac{1}{2\pi i} \int_{1/2-iT}^{1/2+iT} \frac{\widetilde{X}^{s_1}}{u^{s_1}v^{s_1}} \cdot \frac{\dd s_1}{s_1}  + O(x^{-4}) \]
with $\widetilde{X} = \lfloor X \rfloor + 1/2$ and $T=x^5$; and 
\[ \1_{P_+(u) \leq P_-(v)} = \frac{1}{2\pi i} \int_{1/2-iT}^{1/2+iT} \frac{(P_-(v)+1/2)^{s_2}}{P_+(u)^{s_2}} \cdot \frac{\dd s_2}{s_2} + O(x^{-4}). \]
For example, when $UV \asymp X$ and $P_+\asymp P_-$, we can write~\eqref{eq:factorize2} using the above applications of Perron's formula as
\begin{equation}\label{eq:factorize-perron} 
\int_{1/2-iT}^{1/2+iT} \int_{1/2-iT}^{1/2+iT} \left(\sum_{U \leq u < 2U} a(s_1,s_2;u) \overline{\psi}(u)\right) \left(\sum_{V \leq v < 2V} b(s_1,s_2;v) \overline{\psi}(v)\right)  \frac{\dd s_1 \dd s_2}{s_1s_2} + O(x^{-3}), 
\end{equation}
where $a(s_1,s_2;u)$ is supported on those $u$ with $P_+ \leq P_+(u) < 2P_+$ and takes the form
\[ a(s_1,s_2;u) = f(u)\cdot \frac{U^{s_1}}{u^{s_1}} \cdot \frac{P_+^{s_2}}{P_+(u)^{s_2}}, \]
and $b(s_1,s_2;v)$ is supported on those $v$ with $V_0 < v \leq yV_0, v/P_-(v) \leq V_0, P_- \leq P_-(v) < 2P_-$ and takes the form
\[ b(s_1,s_2;v) = f(v) \cdot \frac{\widetilde{X}^{s_1}}{U^{s_1}v^{s_1}} \cdot \frac{(P_-(v)+1/2)^{s_2}}{P_+^{s_2}}.  \]
Note that $|a(s_1,s_2;u)| \ll 1$ and $|b(s_1,s_2;v)| \ll 1$. Thus we can treat the integrand of~\eqref{eq:factorize-perron} just as we did~\eqref{eq:double-sum}. We have two extra powers of $\log T$ which come from integrating $\dd s_1 \dd s_2/|s_1 s_2|$, and we can absorb the errors coming from the $O(x^{-3})$ in~\eqref{eq:factorize-perron} since they are negligible.  We have only one power of $\log y$ arising from the dyadic dissection of $P_-$ and $P_+$. We therefore obtain the upper bound
\[ \ll \left( \frac{X}D + X^{3/4}y^{1/4} + QX^{1/2}  \right) (\log X) (\log T)^2 \log y \]
in total for a given $m$. Summing over $m$ gives a similar contribution to last time (but now with an extra factor of $\log x$), which is equally acceptable.

Finally we have to account for the cases where $4UV <X$ and $P_+\asymp P_-$, and where
$UV \asymp X$ and $2P_+< P_-$. Following the same methods precisely we obtain the same bounds.
 This completes the proof.
\end{proof}

\begin{proof} [Proof of Proposition \ref{prop:factorize2}] Let $g$ be the completely multiplicative function for which $g(p)=f(p)$ when $p\leq 2\sqrt{x}$, and $g(p)=0$ for $p> 2\sqrt{x}$. We apply Proposition \ref{prop:factorize} to obtain
\[ \sum_{q \leq Q} \frac{1}{\varphi(q)} \sum_{\substack{\chi\pmod{q} \\ \cond(\chi) > D}} |S_g(x,\chi)| \ll \left( Q x^{1/2} + x^{7/8} + \frac{x}{D}\right) (\log x)^{5}, \]
an acceptable bound.  We may clearly assume that $y > 2\sqrt{x}$ since otherwise $f=g$.

Now $S_f(x,\chi)=S_g(x,\chi)+S_h(x,\chi)$ where
 $h(n):= f(n)- g(n)$. If $h(n)\ne 0$ then $n=up$ for some prime $p$ in the range $2\sqrt{x}<p\leq y$, and integer $u<\frac 12 \sqrt{x}$, so that $h(n)=f(u)f(p)$. We proceed \emph{analogously} to the proof of Proposition \ref{prop:factorize}, though now $U$ and $V$ are restricted to the range $U\leq  \frac 14 \sqrt{x}$ and $2\sqrt{x}<V\leq y/2$, while $P_+$ and $P_-$ are irrelevant and removed from the argument, so things are significantly simpler. We therefore obtain, for an element of our dyadic partition, the 
same upper bound \eqref{R-Bound}. Summing now over our range for $U$ and $V$ with $UV<X/4$ we obtain the upper bound
\[
\ll \frac{X\log X}D + X^{1/2}y^{1/2} + X x^{-1/4} +  QX^{1/2} \log X .
\]
Finally summing up over $m$ with $X=x/m$ gives an upper bound of
\[
\ll \frac{x(\log x)^2}D + x^{1/2}y^{1/2}\log x + Qx^{1/2} (\log x)^2 .
\]
For the cases in which $UV \asymp X$ we obtain the same upper bound times $(\log T)^2 \ll (\log x)^2$.
Our claimed result follows.
\end{proof}

\section{Consequences of Proposition \ref{prop:factorize2}}\label{sec:thmb-part2}

In this section we deduce Theorem~\ref{Easy Cor 2*}. First we establish a version of Proposition~\ref{prop:factorize2} for $f \in \mathcal C$ that may not be completely multiplicative.

\begin{corollary} \label{Easy Cor 1}
Let  $2 \leq y \leq x$ be large. Let  $f\in \mathcal C$ be supported on the $y$-smooth integers. Let $D\leq  x^{1/3}$ and $Q\leq x/D^2$.  Then
\[ \sum_{q \leq Q} \frac{1}{\varphi(q)} \sum_{\substack{\chi\pmod{q} \\ \cond(\chi) > D}} |S_f(x,\chi)| \ll \left(
\left( Q x^{1/2}   + x^{7/8} + \frac{x}{D}\right) (\log x)^{5} + (xy)^{1/2}    (\log x)^{3} \right)(\log D)^2. \]
\end{corollary}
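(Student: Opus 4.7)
The strategy is to reduce to the completely multiplicative case treated in Proposition~\ref{prop:factorize2}, by means of the classical ``squarefull decomposition'' of a multiplicative function. Let $g$ be the completely multiplicative function defined on primes by $g(p) = f(p)$, and set $h := f * g^{-1}$ where $g^{-1}(n) = \mu(n) g(n)$. Since $|f(p)| \leq 1$, the function $g$ lies in $\mathcal C$ and is itself supported on $y$-smooth integers, so Proposition~\ref{prop:factorize2} applies to it. A direct calculation (using $h(p^k) = f(p^k) - f(p) f(p^{k-1})$) shows that $h$ is multiplicative with $h(p) = 0$ and $|h(p^k)| \leq 2$ for $k \geq 2$; hence $h$ is supported on squarefull integers and satisfies $|h(b)| \leq 2^{\omega(b)}$. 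The identity $f = g * h$, together with the complete multiplicativity of $\chi$, yields the pointwise equality
\[
S_f(x,\chi) = \sum_{\substack{b \leq x \\ b \text{ squarefull}}} h(b) \chi(b)\, S_g(x/b, \chi),
\]
and the triangle inequality reduces matters to estimating, for each such $b$, the quantity $T(x/b) := \sum_{q \leq Q} \varphi(q)^{-1} \sum_{\chi \pmod q,\, \cond(\chi) > D} |S_g(x/b, \chi)|$.

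I would split the summation over $b$ into two regimes. In the main regime $b \leq x/Q$, the hypothesis $Q \leq x/D^2$ forces $b \leq x/D^2 \leq x/D$, so that both $Q \leq x/b$ and $D \leq x/b$, and Proposition~\ref{prop:factorize2} applied to $g$ with $x$ replaced by $x/b$ gives
\[
T(x/b) \ll \Bigl(Q(x/b)^{1/2} + (x/b)^{7/8} + (x/b)/D\Bigr)(\log x)^4 + \bigl((x/b) y\bigr)^{1/2} (\log x)^3.
\]
In the tail regime $b > x/Q$ we have $x/b < Q$ and Proposition~\ref{prop:factorize2} is no longer directly applicable; there I would reduce to sums over primitive characters and apply Cauchy--Schwarz together with Montgomery's multiplicative large sieve inequality to obtain $T(x/b) \ll Q (x/b)^{1/2} (\log x)^{O(1)}$, which is of the same shape as the first term from the previous regime (up to logarithmic factors).

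The final step is to sum the resulting bounds over squarefull $b$ weighted by $2^{\omega(b)}$. The key estimates are
\[
\sum_{\substack{b \text{ squarefull}\\ b \leq B}} \frac{2^{\omega(b)}}{b^{1/2}} \ll (\log B)^2, \qquad \sum_{b \text{ squarefull}} \frac{2^{\omega(b)}}{b^{7/8}} \ll 1, \qquad \sum_{b \text{ squarefull}} \frac{2^{\omega(b)}}{b} \ll 1,
\]
the first of which follows from the unique parametrisation $b = a^2 c^3$ with $c$ squarefree and $(a,c) = 1$, together with the standard estimate $\sum_{a \leq A} 2^{\omega(a)}/a \ll (\log A)^2$. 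Since the effective range of $b$ in the first-regime contribution is $b \leq x/Q$, and the hypothesis $Q \leq x/D^2$ gives $x/Q \geq D^2$, the weighted sum contributes a factor whose logarithmic size is controlled in terms of $\log D$, producing the $(\log D)^2$ factor in the stated bound (with additional boundary factors absorbed into the $(\log x)^5$).

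The main point requiring care is the tail $b > x/Q$, where one must substitute the large sieve for Proposition~\ref{prop:factorize2}; the remainder is a careful bookkeeping of the logarithmic factors arising from the various squarefull sums and from the reduction to primitive characters.
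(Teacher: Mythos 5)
Your decomposition $f = g*h$ (with $g$ completely multiplicative, $g(p)=f(p)$, and $h$ supported on squarefull integers with $|h(p^k)|\le 2$) is exactly the one the paper uses (there $g$ is called $f^*$ and $h$ is called $g$), and reducing to Proposition~\ref{prop:factorize2} via $|S_f(x,\chi)| \le \sum_b |h(b)|\,|S_g(x/b,\chi)|$ is the right plan. The problem is where you split the $b$-sum. You split at $b \le x/Q$, but the contribution of the $Qx^{1/2}$ and $(xy)^{1/2}$ terms from Proposition~\ref{prop:factorize2}, summed against the weight $2^{\omega(b)}/\sqrt b$ over squarefull $b \le x/Q$, produces a factor $\asymp (\log(x/Q))^2$, not $(\log D)^2$. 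Your sentence ``the hypothesis $Q \le x/D^2$ gives $x/Q \ge D^2$, [so] the weighted sum contributes a factor controlled in terms of $\log D$'' is backwards: $x/Q \ge D^2$ is a \emph{lower} bound on $x/Q$, so $(\log(x/Q))^2 \ge 4(\log D)^2$, and in the paper's application ($D$ a power of $\log x$, $Q$ near $\sqrt x$) you would get $(\log x)^2$ in place of $(\log D)^2$, which would break Theorem~\ref{Easy Cor 2*}. Also, $b \le x/Q$ together with $Q \le x/D^2$ does not imply $b \le x/D^2$, as you claim (that inference would require $Q \ge D^2$); what actually makes Proposition~\ref{prop:factorize2} applicable on your range is the innocuous observation that one may assume $Q \ge D$ (else the inner sum over $\chi$ with $\cond(\chi) > D$ is empty).

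The fix is to split at $b \le D^2$ rather than at $b \le x/Q$. For $b \le D^2$ one has $x/b \ge x/D^2 \ge Q$ (by the hypothesis $Q \le x/D^2$) and $x/b \ge x/D^2 \ge D$ (since $D \le x^{1/3}$), so Proposition~\ref{prop:factorize2} applies with $x$ replaced by $x/b$, and now the weighted sum $\sum_{b \le D^2, \text{ squarefull}} 2^{\omega(b)}/\sqrt b \ll (\log D)^2$ gives exactly the stated factor, while the $x^{7/8}$ and $x/D$ terms are handled by the absolutely convergent sums $\sum_b 2^{\omega(b)}/b^{7/8}$ and $\sum_b 2^{\omega(b)}/b$. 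For $b > D^2$ one no longer needs Proposition~\ref{prop:factorize2} at all: the crude Cauchy--Schwarz/large-sieve bound $\sum_{q\le Q}\varphi(q)^{-1}\sum_{\chi}|S_g(X,\chi)| \ll (X + QX^{1/2})(\log Q)^{3/2}$ (the ``large $m$'' estimate from the start of the proof of Proposition~\ref{prop:factorize}, run with $M_0=1$) suffices, and summing it against $2^{\omega(b)}$ over squarefull $b > D^2$ yields $\ll (x/D)(\log x)^{O(1)} + Qx^{1/2}(\log x)^{O(1)}$, which is absorbed. With that change your argument matches the paper's.
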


\begin{proof}  Let $f^*$ be the completely multiplicative function obtained by taking $f^*(p)=f(p)$.  Let $g(p^k)=f(p^k)-f(p)f(p^{k-1})$, so that $g$ is supported only on powerful integers, and $f=g*f^*$. We deduce that 
\[
S_f(x,\chi) = \sum_{n\leq x} g(n) \overline{\chi}(n) S_{f^*}(x/n,\chi),
\]
and so 
\[
|S_f(x,\chi)| \leq  \sum_{n\leq x} |g(n)| \ |S_{f^*}(x/n,\chi)|.
\]
Summing over all $q\leq Q$ we obtain
\[ 
\sum_{q \leq Q} \frac{1}{\varphi(q)} \sum_{\substack{\chi\pmod{q} \\ \cond(\chi) > D}} |S_f(x,\chi)| \leq
\sum_{n\leq x} |g(n)| \ \sum_{q \leq Q} \frac{1}{\varphi(q)} \sum_{\substack{\chi\pmod{q} \\ \cond(\chi) > D}}  |S_{f^*}(x/n,\chi)|.
\]
Set $N = D^2$. For the sum over $n\leq N$ we use Proposition \ref{prop:factorize2}, as  $D,Q\leq x/N$, to obtain the upper bound
\[
\ll \sum_{n\leq N} \frac{|g(n)|}{\sqrt{n}} \ \left( \left( Q x^{1/2}   + x^{7/8} + \frac{x}{D}\right) (\log x)^{5} + (xy)^{1/2}    (\log x)^{3} \right).
\]
Since each $|g(p^k)|\leq 2$ and $g$ is only supported on the powerful, we deduce that 
\[
\sum_{n\leq N} \frac{|g(n)|}{\sqrt{n}} \leq \prod_{p\leq \sqrt{N}} \left( 1 + \frac{2}{p} +   \frac{2}{p^{3/2}} +\ldots \right) \ll (\log N)^2.
\]

It remains to deal with $n>N$: \ The argument near the beginning of the proof of Proposition \ref{prop:factorize} which gave a bound for $m\geq M_0$ can be adjusted here to give a bound when $m\geq 1$, so that
\[
\sum_{q \leq Q} \frac{1}{\varphi(q)} \sum_{\substack{\chi\pmod{q} \\ \cond(\chi) > D}} |S_{f^*}(x,\chi)| 
 \ll \left(x +  Qx^{1/2} \right) (\log Q)^{3/2} .
\]
Therefore the sum over $n>N$ is 
\[
 \ll 
\sum_{N<n\leq x} |g(n)| \  \left( \frac xn +  Q\left( \frac xn \right)^{1/2} \right) (\log Q)^{3/2} \ll
 \frac{x}{\sqrt{N}}  (\log N) (\log x)^{3/2}  + Q x^{1/2} (\log x)^{7/2}.
\]
Taking $N=D^2$ we obtain the claimed result.
\end{proof}

\begin{proof}  [Proof of Theorem \ref{Easy Cor 2*} ] We take $D= (\log x)^{B}$ in Corollary \ref{Easy Cor 1}, noting that 
\[ |\Delta_{\CA}(f,x; q,a)| \leq \frac{1}{\varphi(q)} \sum_{\substack{\chi\pmod q \\ \cond(\chi) > D}} |S_f(x,\chi)| \]
by the definition of $\CA$. Thus Corollary~\ref{Easy Cor 1} implies that the quantity above is
\[ \ll \left(\frac{x}{(\log x)^{B-5}} + \frac{x}{(\log x)^{\gamma/2-3}}\right) (\log D)^2.   \]
This is $\ll x/(\log x)^A$ since $B > A+5$ and $\gamma > 2A+6$.
 \end{proof}

\section{Proof of the uniform version of Theorem~\ref{thm: MathTheorem}(b)}\label{sec:thmb-part3}

 In the section we deduce
Theorem \ref{thm: mr2} from Theorem~\ref{Easy Cor 2*}. We begin by extending Theorem \ref{Easy Cor 2*}  to all $f\in \mathcal C$.

\begin{corollary}\label{thm: conv2}
Fix $A\geq 0,\ B>A+5$ and $\gamma >2A+6$.
Given $x$, let $Q=x^{1/2}/(\log x)^{B}$ and $y = x/(\log x)^{\gamma}$. Let  $\CA$ be the set of all primitive characters of conductor at most $ (\log x)^{B}$.
 Suppose that  $f\in \mathcal C$, and assume that 
\[
\sum_{\substack{ q\leq Q }}  \max_{a:\ (a,q)=1}  | \Delta_{\CA}(f\cdot 1_{\mathcal P},X;q,a) | \ll \frac X {(\log x)^{A}\log(x/y)}
\]
for all $X$ in the range $y\leq X\leq x$.
Then
\[ 
\sum_{q \leq Q} \max_{(a,q)=1} |\Delta_{\CA}(f,x; q,a)|
\ll   \frac x{(\log x)^A}.
\]
\end{corollary}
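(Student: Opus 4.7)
The plan is to use multiplicativity of $f$ to factor it as a \emph{smooth-times-rough} convolution, handling the smooth part by Theorem~\ref{Easy Cor 2*} and reducing the rough part---which our large choice of $y$ forces to be essentially supported on primes---to the hypothesis on $f\cdot 1_{\mathcal P}$.

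Since $y = x/(\log x)^\gamma > \sqrt x$ for large $x$, any $y$-rough integer $n \leq x$ is either $n = 1$ or a prime $p$ with $y < p \leq x$: otherwise $n \geq y^2 > x$. Write $f_s(n) := f(n)\mathbf 1_{n\text{ is $y$-smooth}}$. Multiplicativity of $f$ and coprimality of the $y$-smooth and $y$-rough parts of $n$ yield $f = f_s * f_r$, and on the range $n \leq x$ the only nontrivial contributions from $f_r$ come from $r = 1$ and $r = p$ with $y < p \leq x$. Swapping the order of summation over the smooth variable $m$ and the prime $p$ (and treating the $S_f(x,\chi)$ sums analogously) then gives the identity
\[
\Delta_{\CA}(f,x;q,a) = \Delta_{\CA}(f_s,x;q,a) + \sum_{\substack{m \leq x/y \\ (m,q) = 1}} f_s(m)\, \Delta_{\CA}(h^*, x/m; q, a\overline m),
\]
where $h^*(n) := f(n)\mathbf 1_{n\text{ prime},\ n > y}$. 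Since $\Lambda_{f_s}$ is the restriction of $\Lambda_f$ to prime powers $p^k$ with $p \leq y$, we have $|\Lambda_{f_s}| \leq \Lambda$, so $f_s \in \mathcal C$; Theorem~\ref{Easy Cor 2*} applied to $f_s$ then yields $\sum_{q \leq Q}\max_a|\Delta_{\CA}(f_s,x;q,a)| \ll x/(\log x)^A$.

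The key point for the remaining term is that $h^* = f\cdot 1_{\mathcal P} - f\cdot 1_{\mathcal P,\, p\leq y}$, where the second piece is supported on $n \leq y$; hence for every $X \geq y$,
\[
\Delta_{\CA}(h^*, X; q, b) = \Delta_{\CA}(f\cdot 1_{\mathcal P}, X; q, b) - \Delta_{\CA}(f\cdot 1_{\mathcal P}, y; q, b).
\]
When $m \leq x/y$ both $X = x/m$ and $y$ lie in $[y,x]$, so applying the hypothesis at each value gives $\sum_{q \leq Q} \max_b |\Delta_{\CA}(h^*, x/m; q, b)| \ll (x/m)/((\log x)^A \log(x/y))$. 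Substituting this into the identity, and using $|f_s(m)| \leq 1$ together with $\sum_{m \leq x/y} 1/m \ll \log(x/y)$, produces the required bound $O(x/(\log x)^A)$ on the second term.

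The proof is therefore essentially bookkeeping once Theorem~\ref{Easy Cor 2*} is granted. The only substantive input is that the hypothesis is assumed uniformly for $X \in [y,x]$ (not merely $X = x$); this is exactly what permits us to eliminate the contribution of primes $\leq y$ for free via the subtraction at $X = y$. I do not anticipate a serious obstacle.
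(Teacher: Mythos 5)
Your proof is correct and follows essentially the same route as the paper: you decompose $f$ into its $y$-smooth part $f_s$ (the paper calls it $f_y$) plus the contribution from integers with a prime factor $> y$, observe that the latter reduces to a single large prime times a small cofactor $m \le x/y$, apply Theorem~\ref{Easy Cor 2*} to the smooth part and the hypothesis at $X=x/m$ and $X=y$ to the prime part, and sum over $m$ using $\sum_{m\le x/y}1/m\ll\log(x/y)$. The only cosmetic difference is that you phrase the splitting as a genuine Dirichlet convolution $f=f_s*f_r$ whereas the paper simply analyzes $f-f_y$ directly; your remark that $f_s\in\mathcal C$ (needed to invoke Theorem~\ref{Easy Cor 2*}) makes explicit a check the paper leaves implicit.
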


 \begin{proof}  Let $f_y(p^k)=f(p^k)$ if $p\leq y$, and $f_y(p^k)=0$ otherwise.  If
$f(n)\ne 0$ but $f_y(n)=0$ where $n\leq x$, then $n$ has a prime factor $p>y$, which can only appear in $n$ to the power one, and so 
we can write $n=mp$ with $f(n)=f(m)f(p)$. Moreover $m=n/p< x/y$. Therefore
 \begin{align*}
\Delta_{\CA}(f-f_y,x;q,a)  &= \sum_{\substack{ m \leq x/y \\ (m, q)=1}} f(m)  
\left(  \sum_{\substack{y<p\leq x/m \\ p \equiv a\overline{m}  \pmod q}} f(p)  -  \frac 1{\varphi(q)} \sum_{ \substack{  \chi  \in   \Xi_q }} \chi(a)   \sum_{\substack{y<p\leq x/m  \\ (p,q)=1}} (f\overline{\chi})(p) \right) \\
& = \sum_{\substack{ m \leq x/y \\ (m, q)=1}} f(m)  (  \Delta_{\CA}(f\cdot 1_{\mathcal P},x/m;q,a\overline{m}) -\Delta_{\CA}(f\cdot 1_{\mathcal P},y;q,a\overline{m}) ) .
 \end{align*}
 Summing this up over $q\leq Q$, and as each $|f(m)|\leq 1$, we deduce that 
 \begin{align*}
\sum_{q \leq Q} \max_{(a,q)=1} |\Delta_{\CA}(f,x;q,a)|& \leq \sum_{q \leq Q} \max_{(b,q)=1} |\Delta_{\CA}(f_y,x;q,b)| \\
+\sum_{\substack{ m \leq x/y}}  
\sum_{\substack{ q \leq Q \\ (q,m)=1 }} \max_{(c,q)=1} &  | \Delta_{\CA}(f\cdot 1_{\mathcal P},x/m;q,c) | 
+\sum_{\substack{ m \leq x/y}} 
\sum_{\substack{ q \leq Q \\ (q,m)=1 }} \max_{(d,q)=1}  | \Delta_{\CA}(f\cdot 1_{\mathcal P},y;q,d) |.
\\
 \end{align*}
 We bound the first  term by using Theorem \ref{Easy Cor 2*}, and the other terms using the hypothesis to get an upper bound
 \[
  \ll \frac x{(\log x)^{A}}+\sum_{\substack{ m \leq x/y}} \frac {x/m} {(\log x)^{A}\log(x/y)}\ll \frac x{(\log x)^{A}},
 \]
as claimed. 
 \end{proof}

To use Corollary~\ref{thm: conv2} we need the following result, which follows immediately from the proof of Proposition 3.4 of \cite{GSh}.

\begin{lemma} \label{Using SW} Fix $A,C\geq 0$.   Let $f\in \mathcal C$ be such that 
\[
|\Delta(f,X;q,a)| \ll  \frac X {(\log x)^{A+C}} ,
\]
whenever $(a,q)=1$ for all $X$ in the range $x^{1/2}<X\leq x$. Suppose that $\Xi$ is a set of primitive characters, containing $\ll   (\log x)^{C}$ elements. Let $Q \leq x$, and
for each $q \sim Q$ let $a_q\pmod q$ be a residue class with $(a_q,q) = 1$.  Then
 \[
 \sum_{q \sim Q}   \left|  \Delta_{\Xi}(f,x;q,a_q)  \right|  \ll \frac x{(\log x)^A} 
 \]
if and only if
\[ 
\sum_{q \sim Q} \left|  \Delta(f,x;q,a_q)  \right|    \ll \frac x{(\log x)^A}. 
\]
\end{lemma}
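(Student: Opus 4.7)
The plan is to establish the bound
\[ E \;:=\; \sum_{q \sim Q} \bigl|\Delta_\Xi(f,x;q,a_q) - \Delta(f,x;q,a_q)\bigr| \;\ll\; \frac{x}{(\log x)^A}, \]
from which the stated equivalence of the two sums follows at once by the triangle inequality (in both directions). Using Dirichlet character orthogonality to expand $\sum_{n \equiv a_q \pmod q} f(n)$, the quantity inside the absolute value equals $\frac{1}{\varphi(q)}\bigl[S_f(x,\chi_{0,q}) - \sum_{\chi \in \Xi_q} \chi(a_q) S_f(x,\chi)\bigr]$, where $\chi_{0,q}$ is the principal character modulo $q$. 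In the applications one has $1 \in \Xi$, so $\chi_{0,q} \in \Xi_q$ for every $q$ and the principal character contributions cancel, leaving a sum over the nontrivial $\psi \in \Xi$ with $\cond(\psi) \mid q$. It therefore suffices, for each nontrivial $\psi \in \Xi$ of conductor $r$, to bound
\[ T_\psi \;:=\; \sum_{\substack{q \sim Q \\ r \mid q}} \frac{|S_f(x, \chi_\psi^q)|}{\varphi(q)}, \]
where $\chi_\psi^q$ denotes $\psi$ induced mod $q$, and then to sum over the at most $(\log x)^C$ choices of $\psi$.

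The crucial point is that the pointwise bound $|S_f(x,\chi_\psi^q)| \leq \varphi(q)\cdot x/(\log x)^{A+C}$, obtained immediately from $S_f(x,\chi_\psi^q) = \sum_{(a,q)=1} \overline{\chi_\psi^q}(a)\, \Delta(f,x;q,a)$ (valid because $\psi \neq 1$ makes $\sum_a \overline{\chi_\psi^q}(a) = 0$) together with the SW hypothesis, is far too weak: summing it over $q \sim Q$ loses a factor of $Q$. The plan is instead to exploit the multiplicativity of $f$. When $f$ is completely multiplicative, Möbius inversion on the condition $(n,q)=1$ yields
\[ S_f(x,\chi_\psi^q) \;=\; \sum_{\substack{d \mid q,\; (d,r)=1 \\ d \text{ squarefree}}} \mu(d)\, f(d)\, \overline{\psi}(d)\, S_f(x/d, \psi). \]
Substituting this into $T_\psi$ and swapping the order of summation, the inner $q$-sum becomes $\sum_{q \sim Q,\, rd \mid q} 1/\varphi(q) \ll 1/(\varphi(r)\varphi(d))$, which is valid over this bounded dyadic range because $\varphi(rde) \geq \varphi(rd)\varphi(e)$ and $\sum_{e \sim Y} 1/\varphi(e) = O(1)$. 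The problem thereby reduces to estimating $\varphi(r)^{-1} \sum_{d, (d,r)=1} |S_f(x/d,\psi)|/\varphi(d)$. For $d \leq x^{1/2}$, applying the SW hypothesis at modulus $r$ gives $|S_f(x/d,\psi)| \ll \varphi(r)(x/d)/(\log x)^{A+C}$; combined with the convergent series $\sum_d 1/(d\varphi(d)) = O(1)$, this contributes $\ll x/(\log x)^{A+C}$. The tail $d > x^{1/2}$ contributes only $\ll x^{1/2}/\varphi(r)$ by trivial bounds. Summing over $\psi \in \Xi$ yields $E \leq \sum_\psi T_\psi \ll (\log x)^C \cdot x/(\log x)^{A+C} = x/(\log x)^A$, as required.

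The main obstacle is extending the above from completely multiplicative $f$ to general $f \in \CC$, for which the Möbius identity does not hold verbatim. This is handled, as in the proof of Corollary~\ref{Easy Cor 1}, by writing $f = f^* * g$ where $f^*$ is the completely multiplicative function with $f^*(p) = f(p)$ and $g$ is supported on powerful integers (so that $\sum_n |g(n)|/\sqrt{n} \ll (\log x)^2$). Since Dirichlet characters are completely multiplicative, $S_f(x,\chi_\psi^q) = \sum_m g(m) \overline{\chi_\psi^q}(m)\, S_{f^*}(x/m, \chi_\psi^q)$, and the bound for $T_\psi$ in the completely multiplicative case transfers to $f$ via this convolution, split at $m = x^{1/2}$: the main contribution comes from small $m$ and retains the shape $x/(\log x)^A$ after absorbing the $(\log x)^2$ from the convergent series in $m$ into the available $\log$-savings, while the tail $m > x^{1/2}$ is negligible.
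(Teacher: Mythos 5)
Your overall plan is sound and is almost certainly the same as what the omitted proof (Proposition 3.4 of \cite{GSh}) does: reduce to bounding the difference $E$, observe that (assuming $1\in\Xi$, which holds in the applications but should be flagged as a hypothesis) the principal characters cancel, reduce to bounding $T_\psi$ for each nontrivial $\psi\in\Xi$ of conductor $r$, and then exploit multiplicativity to write $S_f(x,\chi_\psi^q)$ in terms of $S_f(\,\cdot\,,\psi)$, after which the $q$-sum decouples from $d$ via $\sum_{q\sim Q,\ rd\mid q}1/\varphi(q)\ll1/\varphi(rd)$ and the Siegel--Walfisz hypothesis at modulus $r$ does the rest. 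The completely multiplicative case of your argument is correct.

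However, your passage to general $f\in\mathcal C$ has a genuine gap. You write $f=g*f^*$ with $f^*$ completely multiplicative and propose to ``transfer'' the completely multiplicative bound for $T_\psi$ to $f^*$. But that bound used the Siegel--Walfisz hypothesis for the function being averaged: to control $|S_{f^*}(x/(md),\psi)|$ you need $|\Delta(f^*,X;r,a)|\ll X/(\log x)^{A+C}$, and the lemma's hypothesis only supplies this for $f$, not for $f^*$. Deducing Siegel--Walfisz for $f^*$ from that for $f$ is possible (since $f^*=g^{-1}*f$ with $g^{-1}$ supported on powerful integers and $|g^{-1}(n)|\leq\tau(n)$), but it is an additional step with its own range issues (one needs $X/m>x^{1/2}$, not merely $X>x^{1/2}$) and you neither state nor carry it out. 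The cleaner route, and the one the paper itself uses in the proof of Proposition~\ref{prop:factorize}, is to apply the Möbius-type identity directly to $f$: take the convolution inverse $g_f$ of $f$ (which lies in $\mathcal C$, hence is $1$-bounded), set $h(p^k)=(g_f\overline\psi)(p^k)$ for $p\mid q$, $p\nmid r$ and $h=0$ otherwise, so that $f\overline{\chi_\psi^q}=h*(f\overline\psi)$ and $S_f(x,\chi_\psi^q)=\sum_m h(m)S_f(x/m,\psi)$ with each $|h(m)|\leq1$. Running your $T_\psi$ estimate on this identity needs only the stated Siegel--Walfisz bound for $f$ (the new feature is that $m$ ranges over integers whose radical $M$ divides $q$, and one needs $\sum_m 1/(m\varphi(M))<\infty$ in place of $\sum_d 1/(d\varphi(d))<\infty$, plus a Rankin-type tail bound). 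Relatedly, your proposed split at $m=x^{1/2}$ alone is not quite right: it is $md$ (respectively $m$ with the $h$-identity) that must stay below $x^{1/2}$ to keep $x/(md)$ in the range where Siegel--Walfisz applies.
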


\begin{proof} [Proof of Theorem \ref{thm: mr2}] The hypothesis of Corollary \ref{thm: conv2} holds, and so
\[ 
\sum_{q \leq Q} \max_{(a,q)=1} |\Delta_{\CA}(f,x; q,a)|
\ll   \frac x{(\log x)^A}.
\]
Note that 
$|\CA|\asymp (\log x)^{2B}$ so we may take $C=2B$ in Lemma~\ref{Using SW} for each dyadic range of $q$ to deduce our result.
\end{proof}

 \section{$f$ which satisfy the Siegel-Walfisz criterion but not the Bombieri-Vinogradov Hypothesis} \label{sec: SWnotBV}
 
In this section we justify the remark following the statement of Theorem~\ref{thm: mr2}. The proof of Theorem \ref{thm: mr2} allows us to replace  \eqref{eq:BVI3} in the hypothesis by 
  \begin{equation}\label{eq:BVI4}
\sum_{\substack{ q\leq Q }}  \max_{a:\ (a,q)=1}  | \Delta(f\cdot 1_{\mathcal P},X;q,a)-\Delta(f\cdot 1_{\mathcal P},y;q,a) |  \ll_A \frac X {(\log x)^C}
\end{equation}
for all $X$ in the range $y\leq X\leq x$.
 
Let $y=x/ (\log x)^{\gamma}$ and select any $Q$ in the range $x^{1/3}<Q\leq x^{2/5}$.  Let $\mathcal P$ be the set of primes $p$ in the range $y/2<p\leq y$ for which 
 there exists a prime $q \in (Q, 2Q]$ that divides $p-1$.  We will work with the completely multiplicative function $f$, defined as follows: 
 \[
 f(p) = \begin{cases}
 0 & \text{ if } p\leq 2(\log x)^{\gamma} \text{ or } y<p\leq x;\\
 -1& \text{ if }  p \in \mathcal P;\\
  1& \text{ otherwise}.\\
 \end{cases}
  \]
Since $f$ is supported only on $y$-smooth integers, \eqref{eq:BVI4}  trivially holds for all $X$ in the range $y\leq X\leq x$. From now on we follow the arguments of section 8.2 of \cite{GSh}, and assume (1.5) of \cite{GSh}) (which is a strong form of the Prime Number Theorem in arithmetic progressions). First we may deduce that $f$ satisfies the Siegel-Walfisz criterion in the hypothesis of Theorem   \ref{thm: mr2}. Now note that
\[ f(n) = |f(n)| - 2 \cdot \1_{\mathcal P}(n) \]
for each $n \leq x$. Thus
\[ \Delta(f,x;q,1) = \Delta(|f|,x;q,1) - 2\Delta(\1_{\mathcal P},x;q,1). \]
Note that $|f|$ is the indicator function of the set of $y$-smooth integers with no prime factors  $\leq 2(\log x)^{\gamma}$. It is straightforward to establish that this set has level of distribution $x^{1/2-\ee}$. Thus
\[ \sum_{Q < q \leq 2Q} |\Delta(f,x;q,1)| \geq 2\sum_{Q < q \leq 2Q} |\Delta(\1_{\mathcal P},x;q,1)| - O\left(\frac{x}{(\log x)^{\gamma+3}}\right). \]
On the other hand, we have
\[ \Delta(\1_{\mathcal P},x;q,1) = \sum_{\substack{y/2<p\leq y \\ p\equiv 1\pmod{q}}} 1 - \frac{\#\mathcal P}{\varphi(q)}, \]
for prime $q \in (Q, 2Q]$, where, by the definition of $\mathcal P$, we are able to extend the range for the first summation from $p \in \mathcal P$ to all primes in $(y/2,y]$. By the Brun-Titchmarsh inequality, we have $\#\mathcal P \ll y/(\log x)^2$. Thus
\[
|\Delta(\1_{\mathcal P},x;q,1)| \gg  \frac{y}{\varphi(q)\log x} - O\left(\frac{y}{\varphi(q)(\log x)^2}\right) \gg \frac{y}{\varphi(q)\log x}.
\] 
Summing this over all primes $q \in (Q, 2Q]$, we obtain
\[ 
\sum_{Q < q \leq 2Q}   |\Delta(f,x; q,1)| \gg   \frac y{(\log x)^2 } - O\left(\frac{x}{(\log x)^{\gamma+3}}\right) \gg   \frac x{(\log x)^{\gamma+2} } .
\]
 Therefore if this is $\ll x/(\log x)^{A-1}$, we must have $\gamma \geq A-3$, as claimed in the remarks following Theorem  \ref{thm: mr2}.

 \section{Further thoughts} 
 
 Arguably the most intriguing issue is to try to improve the exponent, $\gamma$, of the logarithm in the definition of $y$ in Theorem \ref{thm: mr2}. We have shown that $A-3\leq \gamma(A)\leq 2A+6+\epsilon$; one might guess that the optimal exponent has
 $\gamma(A)=\kappa A+O(1)$ for all $A\geq 0$. 
 
 The bound $\kappa\leq 2$ on the coefficient $\kappa$ is a consequence of the $(xy)^{1/2}(\log x)^{O(1)}$-term
  in the upper bound in Proposition \ref{prop:factorize2}. If one can replace this term by $y (\log x)^{O(1)}$ then $\kappa = 1$ follows. 
  Extending the idea in the proofs of Proposition \ref{prop:factorize2} and Proposition \ref{prop:factorize}, one 
  can restrict attention to a much smaller class of $f$: Those completely multiplicative $f\in \mathcal C$ that are supported only on the primes $\leq 2(\log x)^\gamma$, and the primes in $(y/2,y]$.


\end{document}